\newcommand{\XX}[1]{#1}
\newcommand{\LemmaOnFiniteGenerationOfRankTwoSteinbergGroups}{\XX{12.2}}
\newcommand{\TheoremOnFinitePresentationGeneralTools}{\XX{1.4}}
\newcommand{\PartOfThatTheoremDealingWithTwoSpherical}{(\XX{ii})}
\newcommand{\PartOfThatTheoremDealingWithFGRings}{(\XX{iii})}
\newcommand{\CorollaryOnFiniteGenerationOfKernel}{\XX{12.4}}
\newcommand{\TheoremOnPresentingPSt}{\XX{1.2}}
\newcommand{\PreSteinbergSection}{\XX{7}}
\newcommand{\SteinbergSection}{\XX{6}}
\newcommand{\RemarkOnOmittingTorusActionRelations}{\XX{7.13}}
\newcommand{\RelationsDefiningRootGroups}{\XX{(7.4)}}
\newcommand{\FirstActionOfSiSiOnSj}{\XX{(7.2)}}
\newcommand{\SecondActionOfSiSiOnSj}{\XX{(7.3)}}
\newcommand{\ActionOfSiSiOnXj}{\XX{(7.5)}}
\newcommand{\EqualityOfSiWithComplicatedWord}{\XX{(7.26)}}
\newcommand{\ArtinRelations}{\XX{(7.1)}}
\newcommand{\mEqualsTwoSandXinteraction}{\XX{(7.6)}}
\newcommand{\mEqualsThreeSandXinteraction}{\XX{(7.7)}}
\newcommand{\mEqualsFourSandXinteraction}{\XX{(7.8)}}
\newcommand{\mEqualsSixSandXinteraction}{\XX{(7.9)}}
\newcommand{\FirstTorusActionRelations}{\XX{(7.24)}}
\newcommand{\SecondTorusActionRelations}{\XX{(7.25)}}
\newcommand{\ChevAoneAone}{\XX{(7.10)}}
\newcommand{\FirstChevAtwo}{\XX{(7.11)}}
\newcommand{\LastChevAtwo}{\XX{(7.12)}}
\newcommand{\FirstChevBtwo}{\XX{(7.13)}}
\newcommand{\LastChevBtwo}{\XX{(7.16)}}
\newcommand{\FirstChevGtwo}{\XX{(7.17)}}
\newcommand{\LastChevGtwo}{\XX{(7.23)}}
\def\commutes{\!\rightleftarrows}
\def\singletonalpha{\{\alpha\}}
\newcommand{\N}{\mathbb{N}}     
\newcommand{\Z}{\mathbb{Z}}     
\newcommand{\Q}{\mathbb{Q}}     
\newcommand{\R}{\mathbb{R}}     
\newcommand{\C}{\mathbb{C}}     
\newcommand{\F}{\mathbb{F}}     
\newcommand{\SL}{{\rm SL}}      
\newcommand{\Sp}{{\rm Sp}}      
\newcommand{\St}{\mathfrak{St}} 
\newcommand{\StTits}{\mathfrak{St}^{\scriptstyle\rm Tits}} 
\newcommand{\PSt}{\mathfrak{PSt}} 
\newcommand{\CD}{\mathfrak{CD}} 
\newcommand{\Atilde}{\widetilde{A}}
\newcommand{\Btilde}{\widetilde{B}}
\newcommand{\Ctilde}{\widetilde{C}}
\newcommand{\Dtilde}{\widetilde{D}}
\newcommand{\Etilde}{\widetilde{E}}
\newcommand{\Ftilde}{\widetilde{F}}
\newcommand{\Gtilde}{\widetilde{G}}
\newcommand{\BCtilde}{\widetilde{BC}}
\newcommand{\Xtilde}{\widetilde{X}}
\newcommand{\even}{{\,\rm even}}
\newcommand{\odd}{{\,\rm odd}}
\newcommand{\zeromodthree}{{\rm\,0\,mod\,3}}
\newcommand{\alphabar}{\bar{\alpha}}
\newcommand{\betabar}{\bar{\beta}}
\newcommand{\gammabar}{\bar{\gamma}}
\newcommand{\deltabar}{\bar{\delta}}
\newcommand{\e}{\varepsilon}
\newcommand{\lambdabar}{\bar{\lambda}}
\newcommand{\mubar}{\bar{\mu}}
\newcommand{\sigmabar}{\bar{\sigma}}
\newcommand{\g}{\mathfrak{g}}   
\newcommand{\G}{\mathfrak{G}}   
\newcommand{\Runits}{R^*} 
\newcommand{\U}{\mathfrak{U}}   
\newcommand{\Phibar}{\overline{\Phi}} 
\newcommand{\Lambdabar}{\overline{\Lambda}} 
\newcommand{\htilde}{\tilde{h}} 
\newcommand{\stilde}{\tilde{s}} 
\newcommand{\sstar}{s^*}
\newcommand{\what}{\hat{w}} 
\newcommand{\sltwo}{\mathfrak{sl}_2}
\def\mathrlap#1{\mathchoice
{\rlap{$\displaystyle #1$}}%
{\rlap{$\textstyle #1$}}%
{\rlap{$\scriptstyle #1$}}%
{\rlap{$\scriptscriptstyle #1$}}}
\def\mathllap#1{\mathchoice
{\llap{$\displaystyle #1$}}%
{\llap{$\textstyle #1$}}%
{\llap{$\scriptstyle #1$}}%
{\llap{$\scriptscriptstyle #1$}}}
\let\iso\cong
\let\cong\equiv
\newcommand{\sset}{\subseteq}
\newcommand{\tensor}{\otimes}
\newcommand{\freeproduct}{\mathop{*}}
\newcommand{\rk}{\mathop{\rm rk}\nolimits}
\newcommand{\ad}{\mathop{\rm ad}\nolimits}
\newcommand{\Aut}{\mathop{\rm Aut}\nolimits}
\newtheorem{theorem}{Theorem}
\newtheorem{lemma}[theorem]{Lemma}
\newtheorem{corollary}[theorem]{Corollary}
\theoremstyle{remark}
\newtheorem*{remark}{Remark}
\numberwithin{equation}{section}
\numberwithin{theorem}{section}
\numberwithin{table}{section}
\begin{document}

\title{Presentation of affine Kac-Moody groups
  over rings}
\author{Daniel Allcock}
\thanks{Supported by NSF grant DMS-1101566}
\address{Department of Mathematics\\University of Texas, Austin}
\email{allcock@math.utexas.edu}
\urladdr{http://www.math.utexas.edu/\textasciitilde allcock}
\subjclass[2000]{%
Primary: 20G44
; Secondary: 14L15
, 22E67
, 19C99
}
\date{June 11, 2015}

\begin{abstract}
Tits has defined Steinberg groups and Kac-Moody groups for any root
system and any commutative ring~$R$.  We establish a Curtis-Tits style
presentation for the Steinberg group $\St$ of any rank${}\geq3$
irreducible affine root system, for any~$R$.  Namely, $\St$ is the
direct limit of the Steinberg groups coming from the $1$- and $2$-node
subdiagrams of the Dynkin diagram.  In fact we give a completely
explicit presentation.  Using this we show that $\St$ is finitely
presented if the rank is${}\geq4$ and $R$ is finitely generated as a
ring, or if the rank is~$3$ and $R$ is finitely generated as a module
over a subring generated by finitely many units.  Similar results hold
for the corresponding Kac-Moody groups when $R$ is a Dedekind domain
of arithmetic type.
\end{abstract}

\maketitle

\section{Introduction}
\label{sec-introduction}

\noindent
Suppose $R$ is a commutative ring and $A$ is one of the {\it
  ABCDEFG\/} Dynkin diagrams, or equivalently its Cartan matrix.
Steinberg defined what is now called the Steinberg group $\St_A(R)$,
by generators and relations \cite{Steinberg}.  It plays a central role
in K-theory and some aspects of Lie theory.


Kac-Moody algebras are infinite-dimensional generalizations of the
semisimple Lie algebras.  When $R=\R$ and $A$ is an affine Dynkin
diagram, the corresponding Kac-Moody group is a central extension of
the loop group of a finite-dimensional Lie group.  For a general ring
$R$ and any generalized Cartan matrix $A$, the definition of a
Kac-Moody group is due to Tits \cite{Tits}.  A difficulty in
tracing the story is that Tits began by defining a ``Steinberg group''
which unfortunately differs from Steinberg's original group when $A$
has an $A_1$ component.  This was resolved by
Morita-Rehmann \cite{Morita-Rehmann} by adding extra relations to
Tits' definition.  So there are two definitions of the Steinberg
group.  Increasing the chance of confusion, the definitions
agree for most $A$ of interest, including the irreducible affine
diagrams of rank${}\geq3$.  We follow Morita-Rehmann, so the
Steinberg group $\St_A(R)$ reduces to Steinberg's original group when
this is defined.  See section~\ref{sec-background} for further background on $\St$.

Tits then defined another functor $R\mapsto\tilde\G_{\!A}(R)$ as a
quotient of his version of the Steinberg group.  In this paper we will
omit the tilde and refer to $\G_{\!A}(R)$ as the Kac-Moody group of
type $A$ over $R$.  The  relations added by Morita-Rehmann to the definition of
$\St_A(R)$   are among the relations that Tits
imposed in his definition of $\G_{\!A}(R)$.  Therefore we may regard
$\G_{\!A}(R)$ as a quotient of $\St_A(R)$, just as Tits did, even
though our $\St_A(R)$ is not quite the same as his.  
See
section~\ref{sec-background} for further background on $\G$.

(Tits actually defined $\tilde\G_{\!D}(R)$ where $D$ is a root datum;
by $\G_{\!A}(R)$ we intend the root datum whose generalized Cartan
matrix is $A$ and which is ``simply-connected in the strong sense''
\cite[p.~551]{Tits}.  The general case differs from this one  by
enlarging or
shrinking the center of $\tilde\G_{\!D}(R)$.)

The meaning of ``Kac-Moody group'' is far from standardized.  In
\cite{Tits} Tits wrote down axioms (KMG1)--(KMG9) that one could
demand of a functor from rings to groups before calling it a Kac-Moody
functor.  He showed \cite[Thm.~$1'$]{Tits} that any such functor admits a
natural homomorphism from $\G_{\!A}$, which is an isomorphism at every
field.  So Kac-Moody groups over fields are well-defined, and over
general rings $\G_{\!A}$ approximates the yet-unknown ultimate
definition.  This is why we refer to $\G_{\!A}$ as the Kac-Moody
group.  But $\G_{\!A}$ does not quite satisfy Tits' axioms, so
ultimately some other language may be better.  See
section~\ref{sec-finite-presentations} for more remarks on this.

The purpose of this paper is to simplify Tits' presentations of
$\St_A(R)$ and $\G_{\!A}(R)$ when $A$ is an affine Dynkin
diagram of rank (number of nodes) at least~$3$.  
We will always take affine diagrams to be irreducible.
We will
show that 
$\St_A(R)$ and $\G_{\!A}(R)$ 
are finitely presented under quite weak
hypotheses on~$R$.  This is surprising because there is no obvious
reason for an infinite-dimensional group over (say) $\Z$ to be
finitely presented, and Tits' presentations are ``very'' infinite.
His generators are indexed by all pairs (root, ring element), and his
relations specify the commutators of many pairs of these
generators.  Subtle implicitly-defined coefficients appear throughout
his relations.

The main step in proving our finite presentation results is to first
establish smaller, and more explicit, presentations for $\St_A(R)$ and
$\G_{\!A}(R)$.  These presentations are not
necessarily finite, but they do apply to all~$R$.  In \cite{Allcock} we
wrote down a presentation for a group functor we called the
pre-Steinberg group $\PSt_A$.  We have reproduced it in
section~\ref{sec-the-presentation}, for any generalized Cartan matrix $A$.  The generators
are $S_i$ and $X_i(t)$ with $i$ varying over the nodes of the Dynkin
diagram and $t$ varying over $R$.  The relations are
\eqref{eq-additivity-in-simple-root-groups}--\eqref{eq-second-torus-action-relation}, but \eqref{eq-first-torus-action-relation}--\eqref{eq-second-torus-action-relation} may be omitted when $A$ is
$2$-spherical (it has no edges labeled~$\infty$)
and has no $A_1$ components.  This case includes all affine
diagrams of rank${}\geq3$.  The only way the presentation fails to be
finite is that the $X_i(t)$ and some of the relations are
parameterized by elements of $R$ (or pairs of elements).

The name ``pre-Steinberg group'' reflects the fact that there is a
natural map from $\PSt_A(R)$ to the Steinberg group $\St_A(R)$.  In
section~\ref{sec-background} we will describe this in a conceptual manner.  But in
terms of presentations it suffices to say that our $X_i(t)$ and $S_i$
map to the group elements 
$x_{\alpha_i}(t)$ and $\what_{\alpha_i}(1)$  in the Morita-Rehmann
definition of $\St_A(R)$ in \cite[\S2]{Morita-Rehmann}.  Our general
philosophy is that $\PSt_A(R)$ is interesting only as a means of
approaching $\St_A(R)$,  as in the following theorem, which is our main result.
  

\begin{theorem}[Presentation of affine Steinberg \& Kac-Moody groups]
\label{thm-presentations}
Suppose $A$ is an affine Dynkin diagram of rank${}\geq3$
and $R$ is a commutative ring.  Then the natural map
from the pre-Steinberg group $\PSt_A(R)$ to the Steinberg group
$\St_A(R)$ is an isomorphism.  In particular, $\St_A(R)$
has a presentation with generators $S_i$ and $X_i(t)$, with~$i$
varying over the simple roots and $t$ over $R$, and relations
\eqref{eq-additivity-in-simple-root-groups}--\eqref{eq-6-Chevalley-distant-short-and-long}.

One obtains Tits' Kac-Moody group $\G_{\!A}(R)$ by adjoining the relations
\begin{align}
\label{eq-torus-relations}
\htilde_i(u)\htilde_i(v)={}&\htilde_i(u v)
\\
\noalign{\noindent for all simple roots $i$ and all units $u,v$ of $R$, where}
\notag
\htilde_i(u)
:={}&\stilde_i(u)\stilde_i(-1).
\\
\notag
\stilde_i(u)
:={}&X_i(u)S_i X_i(1/u) S_i^{-1} X_i(u).
\end{align}
\end{theorem}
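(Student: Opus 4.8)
The plan is to prove that the natural map $\PSt_A(R)\to\St_A(R)$ is an isomorphism by exhibiting an explicit inverse; surjectivity is easy, so this is the real content. The first observation is that, since an affine diagram of rank${}\geq3$ is $2$-spherical and has no $A_1$ component, the presentation of $\PSt_A$ recalled in section~\ref{sec-the-presentation} simplifies: the relations \eqref{eq-first-torus-action-relation}--\eqref{eq-second-torus-action-relation} may be dropped, after which every defining relation involves only the generators $S_i,X_i(t)$ of a single $1$- or $2$-node subdiagram $A_J$, and each such $A_J$ is of finite type ($A_1$, $A_1\times A_1$, $A_2$, $B_2$ or $G_2$), so the Chevalley commutator relations \ChevAoneAone--\ChevSixDistantShortLong\ occurring in the presentation are the classical ones for rank-$2$ systems. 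Thus the theorem amounts to the assertion that $\St_A(R)$ admits a presentation using only these rank-$\le2$ generators and relations --- a Curtis-Tits statement. Surjectivity is immediate: $\St_A(R)$ is generated by its real root groups, and every $x_\alpha(t)$ with $\alpha=w(\alpha_i)$ is obtained from $x_{\alpha_i}(\pm t)$ by conjugating with a product of the $\what_{\alpha_j}(1)$'s, all of which lie in the image.

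To build the inverse I would present $\St_A(R)$ by Tits' original relations --- one root-group generator $x_\alpha(t)$ for each real root $\alpha$, additivity in each root group, and one commutator relation for each prenilpotent pair of real roots, the Morita-Rehmann relations being vacuous because $A$ has no $A_1$ component. In $\PSt_A(R)$ the $S_i$ satisfy the Artin relations \ArtinRelations, so by Matsumoto's theorem each element $w$ of the Weyl group $W$ of $A$ has a canonical lift $\what\in\PSt_A(R)$, namely the product of the $S_i$'s along any reduced expression for $w$. Since every real root of the affine system is $w(\alpha_i)$ for some $w,i$, I would set $x_\alpha(t):=\what\,X_i(t)\,\what^{-1}\in\PSt_A(R)$. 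One then must (a) show this is independent of the choice of $(w,i)$ with $\alpha=w(\alpha_i)$, which by the standard connectivity argument in the Cayley graph of $W$ reduces to a computation inside $1$- and $2$-node subsystems using \ArtinRelations, the relations \FirstActionOfSiSiOnSj, \SecondActionOfSiSiOnSj, \ActionOfSiSiOnXj\ controlling how Weyl lifts act on the simple root groups, and the other rank-$\le2$ relations; and (b) verify Tits' relations for these elements --- additivity being \eqref{eq-additivity-in-simple-root-groups} conjugated by $\what$, and each commutator relation being obtained by moving the prenilpotent pair by $W$ into a convenient position (one can organize the pairs by the finite ``horizontal'' root system left after deleting the affine node, together with the contribution of the null root) and then deducing the relation from the Chevalley relations \ChevAoneAone--\ChevSixDistantShortLong, again conjugated by suitable Weyl lifts. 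With (a) and (b) in hand one gets a homomorphism $\St_A(R)\to\PSt_A(R)$, inverse to the natural map on the generators $S_i,X_i(t),x_\alpha(t)$, so the two groups are isomorphic; the explicit presentation of $\St_A(R)$ is then immediate from that of $\PSt_A$.

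For the Kac-Moody statement, recall that Tits builds $\G_A(R)$ from his Steinberg group by adjoining an honest split torus, with relations prescribing its conjugation action on the root groups and identifying each $\htilde_\alpha(u)$ with the corresponding torus element; since $A$ has no $A_1$ component these added relations include the Morita-Rehmann relations, so $\G_A(R)$ is a quotient of our $\St_A(R)\iso\PSt_A(R)$. When one rewrites Tits' added relations in terms of the $S_i$ and $X_i(t)$, the torus-action part is already a consequence of the retained relations of $\PSt_A$ (compare Remark~\RemarkOnOmittingTorusActionRelations\ and the relations \FirstTorusActionRelations--\SecondTorusActionRelations), so the only surviving content is the torus multiplicativity $\htilde_i(u)\htilde_i(v)=\htilde_i(uv)$; and it suffices to impose this for the simple roots, since conjugating it by a Weyl lift $\what$ (well defined by the previous paragraph) reproduces the corresponding relation for $w(\alpha_i)$. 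Hence $\G_A(R)$ is obtained from the presentation of $\St_A(R)$ by adjoining \eqref{eq-torus-relations}.

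I expect the main obstacle to be step (b): showing that each of Tits' prenilpotent-pair commutator relations is forced by the rank-$\le2$ relations, over an arbitrary commutative ring. Tits' relations are indexed by data (real roots, prenilpotent pairs) that live at ranks up to that of $A$, and reducing each to rank-$\le2$ data is a delicate combinatorial argument of Curtis-Tits type --- the affine complication being that a prenilpotent pair of real roots need not be $W$-conjugate to a pair of simple roots, so no single rank-$2$ relation can be quoted directly. This is compounded by the need to work over a general ring and to keep accurate track of the implicit sign and coefficient data appearing in the relations of section~\ref{sec-the-presentation}. The well-definedness in step (a) is the secondary difficulty, resting on \ArtinRelations\ and on \FirstActionOfSiSiOnSj, \SecondActionOfSiSiOnSj, \ActionOfSiSiOnXj; everything else --- additivity, surjectivity, and the Kac-Moody bookkeeping --- is routine.
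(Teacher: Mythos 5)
There is a genuine gap, and it sits exactly where you admit you only ``expect the main obstacle'' to lie. Your step (b) proposes to verify each of Tits' commutator relations by moving the prenilpotent pair by the Weyl group into a convenient position and then quoting a rank-$\le2$ Chevalley relation. That works (and is essentially the content of the two-models theorem~\ref{thm-two-models-of-pre-Steinberg} quoted from \cite{Allcock}) only for \emph{classically} prenilpotent pairs, i.e.\ pairs whose projections $\alphabar,\betabar$ to the finite system $\Phibar$ are linearly independent, since those lie in a spherical rank-$\le2$ subsystem. But the pairs that actually distinguish $\St_A(R)$ from $\PSt_A(R)$ are precisely the prenilpotent pairs that are \emph{not} classically prenilpotent: by lemma~\ref{lem-prenilpotent-but-not-classically-prenilpotent-pairs} these are the pairs with $\alphabar$ and $\betabar$ proportional, so $(\Q\alpha\oplus\Q\beta)\cap\Phi$ is an \emph{infinite} (affine rank-$\le2$) system, and no Weyl conjugation places such a pair inside a finite $A_2$, $B_2$ or $G_2$ subsystem. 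For these pairs the required relation (usually $[\U_\alpha,\U_\beta]=1$, but \emph{not} commutativity in the $\BCtilde{}_n^\odd$ case with $\alpha+\beta$ a root) cannot be ``quoted''; it has to be \emph{derived} inside $\PSt$, and your proposal contains no method for doing so.

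The paper's proof is devoted entirely to this missing step. The key device is to write $\beta$ as a positive combination of roots $\gamma,\delta$ (or $\lambda,\sigma$, $\mu,\lambda$, etc.) whose projections to $\Phibar$ \emph{are} independent, use the classical Chevalley relation for that pair to express an arbitrary element of $\U_\beta$ as a product of commutators and auxiliary root-group elements, and then show term by term that $\U_\alpha$ centralizes everything --- seven cases in all, depending on $\Phi$ and the position of $\alphabar=\betabar$ (or $\betabar=2\alphabar$). Two further points your plan does not anticipate: in the $G_2$-short-root and $\BCtilde{}_n^\odd$ cases the computation leaves undetermined signs $\e,\e'$ (or an undetermined integer $C$), which the paper pins down by a universality argument --- specializing to $R=\C$ and using the faithful-enough action of $\St(\C)$ on the Kac-Moody algebra $\g$ to rule out $C\neq0$ (resp.\ force $C=4$); and in case~\XX{6} the relation to be established is the nontrivial one $[X_\alpha(t),X_\beta(u)]=X_{\alpha+\beta}(4tu)$, which must be matched against Tits' implicitly defined constant, not just shown to be \emph{some} relation. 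Your steps (a) and the classically-prenilpotent part of (b) are fine in outline (though the paper avoids redoing them by citing theorem~\ref{thm-two-models-of-pre-Steinberg}), and your treatment of the $\G_{\!A}$ statement is consistent with the paper's; but without an argument for the parallel-projection pairs the central claim $\PSt_A(R)\iso\St_A(R)$ is not proved.
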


We remark that if $A$ is a spherical diagram (that is, its Weyl group
is finite) then it follows immediately from an alternate description
of $\PSt_A$ that $\PSt_A\to\St_A$ is an isomorphism; see
section~\ref{sec-background} or \cite[\S\PreSteinbergSection]{Allcock}.  So
theorem~\ref{thm-presentations} extends the isomorphism
$\PSt_A\iso\St_A$ from the spherical case to the affine case,
except for the two affine diagrams of rank~$2$.  See
\cite{Allcock-Carbone} for a further extension, to the simply-laced
hyperbolic case.

\smallskip
For a moment we return to the case where $A$ is an arbitrary
generalized Cartan matrix.  If $B_1\sset B_2$ are two subdiagrams of
$A$ then there is a natural homomorphism
$\PSt_{B_1}(R)\to\PSt_{B_2}(R)$.  This is because the generators and
relations of $\PSt_{B_1}(R)$ are among those of $\PSt_{B_2}(R)$, by
the fact that our presentations of these groups are defined in terms
of the nodes and edges of these subdiagrams of $A$.  Using these maps,
we consider the directed system of groups $\PSt_B(R)$ where $B$ varies
over the subdiagrams of $A$ of rank${}\leq2$.  It is a formality that
the direct limit is $\PSt_A(R)$; this is just an abstract way of
saying that each generator or relation of $\PSt_A(R)$ already appears
in the presentation of some $\PSt_B(R)$ with $B$ of rank${}\leq2$.

When $A$ is affine of rank${}\geq3$, $\PSt_A(R)\to\St_A(R)$ is an
isomorphism by theorem~\ref{thm-presentations}.  And $\PSt_B(R)\to\St_B(R)$ is an
isomorphism for every proper subdiagram $B$ of $A$, since such
subdiagrams are spherical.  It follows that we may replace $\PSt$ by
$\St$ throughout the preceding paragraph, proving the following
result.  The point is that affine Steinberg groups of rank${}\geq3$
are built up from the classical Steinberg groups of types $A_1$,
$A_1^2$, $A_2$, $B_2$ and~$G_2$.

\begin{corollary}[Curtis-Tits presentation]
\label{cor-Steinberg-as-direct-limit}
Suppose $A$ is an affine Dyn\-kin diagram of rank${}\geq3$
and $R$ is a commutative ring.
Then $\St_A(R)$ is the direct limit of the groups $\St_B(R)$, where $B$
varies over the subdiagrams of $A$ of rank${}\leq2$, and the maps
between these groups are as specified above.
The same result also holds with $\St$ replaced by $\G$
throughout. 
\qed
\end{corollary}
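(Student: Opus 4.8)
The plan is to deduce the corollary formally from Theorem~\ref{thm-presentations} together with the general nonsense already sketched in the paragraph preceding the statement. First I would recall the key structural fact from \cite[\S\PreSteinbergSection]{Allcock}: for \emph{any} generalized Cartan matrix $A$, the group $\PSt_A(R)$ is by construction the direct limit of the system $\{\PSt_B(R)\}$ with $B$ ranging over the subdiagrams of $A$ of rank${}\leq 2$, connected by the natural inclusion-induced maps $\PSt_{B_1}(R)\to\PSt_{B_2}(R)$. As noted in the excerpt, this is a tautology: every generator $S_i$, $X_i(t)$ of $\PSt_A(R)$ lies in some rank-$1$ subdiagram, and every defining relation \eqref{eq-additivity-in-simple-root-groups}--\eqref{eq-6-Chevalley-distant-short-and-long} involves nodes and edges lying within a single subdiagram of rank${}\leq 2$, so it already holds in the corresponding $\PSt_B(R)$. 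Hence the cocone $\{\PSt_B(R)\to\PSt_A(R)\}$ satisfies the universal property.

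Next I would replace $\PSt$ by $\St$ everywhere in that directed system. For the apex this is exactly Theorem~\ref{thm-presentations}: since $A$ is affine of rank${}\geq 3$, the natural map $\PSt_A(R)\to\St_A(R)$ is an isomorphism. For each $B$ in the system, $B$ is a proper subdiagram of an irreducible affine diagram, hence spherical (its Weyl group is finite), so by the remark following Theorem~\ref{thm-presentations} — equivalently by \cite[\S\PreSteinbergSection]{Allcock} — the map $\PSt_B(R)\to\St_B(R)$ is also an isomorphism. Moreover these isomorphisms are compatible with the transition maps, because both the $\PSt$-maps and the $\St$-maps are induced by the same diagram inclusions $B_1\sset B_2$ on the level of the common generators $S_i, X_i(t)$, which map to $x_{\alpha_i}(t)$ and $\what_{\alpha_i}(1)$ as described in the introduction. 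Therefore the two directed systems are isomorphic, and so are their direct limits: $\St_A(R)=\varinjlim_B \St_B(R)$.

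For the final sentence, one carries the isomorphism $\varinjlim_B\St_B(R)\iso\St_A(R)$ through the quotient defining the Kac-Moody groups. Both $\G_{\!B}(R)$ for the various $B$ and $\G_{\!A}(R)$ are obtained from the corresponding Steinberg groups by adjoining the torus relations \eqref{eq-torus-relations}, and for a fixed simple root $i$ this relation lives entirely inside the rank-$1$ subdiagram $\{i\}$; hence the normal subgroup it generates in $\St_A(R)$ is the image of the union over $B$ of the corresponding normal subgroups of the $\St_B(R)$. Passing to quotients commutes with the direct limit, so $\G_{\!A}(R)=\varinjlim_B \G_{\!B}(R)$ with the maps induced as before. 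I do not expect any genuine obstacle here; the entire content is packaged in Theorem~\ref{thm-presentations}, and the only point requiring a word of care is the compatibility of the sphericity-based isomorphisms $\PSt_B\iso\St_B$ with the transition maps — but this is immediate since all maps in sight are defined on the same named generators by the same formulas.
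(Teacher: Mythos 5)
Your proposal is correct and follows essentially the same route as the paper: the direct limit of the $\PSt_B(R)$ is tautologically $\PSt_A(R)$, Theorem~\ref{thm-presentations} identifies the apex with $\St_A(R)$, sphericity of the proper subdiagrams identifies each $\PSt_B(R)$ with $\St_B(R)$ compatibly with the transition maps, and the $\G$ case follows by adjoining the rank-$1$ torus relations \eqref{eq-torus-relations}, since quotients commute with direct limits. Your third paragraph simply spells out the $\G$ statement that the paper asserts without further comment; no gap.
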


An informal way to restate
corollary~\ref{cor-Steinberg-as-direct-limit} is that a presentation
for $\St_A(R)$ can be got by  amalgamating one's favorite
presentations for the $\St_B(R)$'s.
Splitthoff \cite{Splitthoff} discovered quite weak sufficient
conditions for the latter groups to be finitely presented.  When these
hold, one would therefore expect $\St_A(R)$ also to be finitely
presented.  The next theorem expresses this idea precisely.
Claim~\ref{item-rk-3-and-module-finite-over-suitable-subring} is part of \cite[Thm.\ \TheoremOnFinitePresentationGeneralTools]{Allcock}.  See section~\ref{sec-finite-presentations} for
the proof of claim~\ref{item-rk-4-and-finitely-generated-ring}.

\begin{theorem}[Finite presentability]
\label{thm-finite-presentation-of-Steinberg}
Suppose $A$ is an affine Dynkin diagram
and $R$ is any commutative ring.
Then the Steinberg group $\St_A(R)$ is fi\-ni\-tely presented as a group if either
\begin{enumerate}
\item
\label{item-rk-4-and-finitely-generated-ring}
$\rk A>3$ and $R$ is finitely generated as a ring, or
\item
\label{item-rk-3-and-module-finite-over-suitable-subring}
$\rk A=3$ and $R$ is finitely generated as a module over a subring
generated by finitely many units.
\end{enumerate}
In either case, if the unit group of $R$ is finitely generated as an
abelian group, then Tits' Kac-Moody group $\G_{\!A}(R)$ is finitely presented as a group.
\end{theorem}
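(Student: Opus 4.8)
The plan is to reduce everything to the Curtis-Tits description in corollary~\ref{cor-Steinberg-as-direct-limit}, which exhibits $\St_A(R)$ as the direct limit of the groups $\St_B(R)$ as $B$ ranges over the subdiagrams of~$A$ of rank${}\leq2$. Because $A$ has only finitely many nodes this is a colimit over a finite diagram of groups, so the problem splits into two parts: arranging that enough of the building blocks $\St_B(R)$ are themselves finitely presented, and knowing that the colimit of finitely many finitely presented (or merely finitely generated) groups, formed along the maps of corollary~\ref{cor-Steinberg-as-direct-limit}, is again finitely presented. The second part is the content of the general tool \cite[Thm.~\TheoremOnFinitePresentationGeneralTools]{Allcock}; beyond finite presentability of the top pieces, its hypotheses concern the finite generation of certain rank-$2$ Steinberg subgroups and of certain kernels, which are supplied by \cite[Lemma~\LemmaOnFiniteGenerationOfRankTwoSteinbergGroups\ and Cor.~\CorollaryOnFiniteGenerationOfKernel]{Allcock}. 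So the work is to secure, under the stated hypotheses on~$R$, finite presentability of the relevant input groups, and then to invoke that tool.

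Claim~\ref{item-rk-3-and-module-finite-over-suitable-subring} is then immediate: when $\rk A=3$ every proper subdiagram of~$A$ has rank${}\leq2$, and \cite[Thm.~\TheoremOnFinitePresentationGeneralTools]{Allcock} applies directly---indeed this claim is a part of that theorem. The point of the hypothesis that $R$ be finitely generated as a module over a subring generated by finitely many units is that it is exactly Splitthoff's condition~\cite{Splitthoff} for the rank-$2$ Steinberg groups $\St_{A_2}(R)$, $\St_{B_2}(R)$ and $\St_{G_2}(R)$ to be finitely presented, and it also makes $\St_{A_1}(R)$ finitely generated---conjugating by the torus elements $\htilde_i(u)$ reaches all of~$R$ from a module basis once there are enough units. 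One cannot drop it: for a general finitely generated ring, such as $\F_q[t]$, the group $\St_{A_2}(R)$ need not be finitely presented, which is exactly why the rank-$3$ case is harder than the higher-rank one.

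For claim~\ref{item-rk-4-and-finitely-generated-ring} the extra room afforded by $\rk A\geq4$ lets us replace that hypothesis by mere finite generation of~$R$. An irreducible affine diagram of rank${}\geq4$ has no triple bonds and contains a path on three nodes, hence contains a connected spherical subdiagram of rank~$3$---necessarily of type $A_3$, $B_3$ or $C_3$---and for an irreducible spherical diagram of rank${}\geq3$ the corresponding Steinberg group over~$R$ is finitely presented for \emph{every} finitely generated ring~$R$, the rank-$3$ case of Splitthoff imposing no condition on~$R$ beyond finite generation. I would choose a node~$v$ for which $B:=A\setminus\{v\}$ is connected, of rank $\rk A-1\geq3$ (such a~$v$ exists for any connected diagram), so that $\St_B(R)$ is finitely presented by Splitthoff, and then recover $\St_A(R)$ from $\St_B(R)$ by adjoining the generators $S_v$ and $X_v(t)$, $t\in R$, together with the relations describing how they interact with the generators of~$B$; by corollary~\ref{cor-Steinberg-as-direct-limit} those relations are controlled by the rank${}\leq2$ subdiagrams $\{v\}$ and $\{v,w\}$, $w\in B$, and the gluing can be routed through the connected rank-$3$ subdiagrams $\{v,w,w'\}$, where $w$ is a neighbor of~$v$ and $w'$ a further neighbor of~$w$ in~$B$, whose Steinberg groups are finitely presented. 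The step I expect to be the main obstacle is checking the finite-generation hypotheses that feed into \cite[Thm.~\TheoremOnFinitePresentationGeneralTools]{Allcock}: in types such as $\Atilde_n$ the subgroups one amalgamates over are images of the groups $\St_{A_1}(R)$ or $\St_{A_1}(R)\times\St_{A_1}(R)$, which need not be finitely presented, so one must show these images are nonetheless finitely generated inside the ambient group---this is where \cite[Cor.~\CorollaryOnFiniteGenerationOfKernel]{Allcock} does the real work---together with the combinatorial bookkeeping of carrying out such a subdiagram reduction uniformly over all affine types.

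Finally, the Kac-Moody addendum. By theorem~\ref{thm-presentations}, $\G_{\!A}(R)$ is the quotient of $\St_A(R)$ by the normal closure of the set of elements $\htilde_i(u)\htilde_i(v)\htilde_i(uv)^{-1}$, for $i$ a simple root and $u,v\in\Runits$. A standard computation with the torus-action relations shows that each $c_i(u,v):=\htilde_i(u)\htilde_i(v)\htilde_i(uv)^{-1}$ is bi-multiplicative in $(u,v)$ and central in $\St_A(R)$: conjugation by $c_i(u,v)$ sends each root-subgroup element $X_\beta(t)$ to $X_\beta(u^nv^n(uv)^{-n}t)=X_\beta(t)$, where $n=\langle\beta,\alpha_i^\vee\rangle$, and the root subgroups generate $\St_A(R)$. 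Hence that normal closure is just the central subgroup generated by the $c_i(u,v)$, and bi-multiplicativity makes the assignment $(u,v)\mapsto c_i(u,v)$ factor through $\Runits\otimes_\Z\Runits$; so if $\Runits$ is finitely generated as an abelian group, say by $\e_1,\dots,\e_m$, that subgroup is already generated by the finitely many elements $c_i(\e_j,\e_k)$. Therefore $\G_{\!A}(R)$ is obtained from the finitely presented group $\St_A(R)$ by imposing finitely many further relations, and so is itself finitely presented.
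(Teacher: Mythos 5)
Your handling of claim \ref{item-rk-3-and-module-finite-over-suitable-subring} matches the paper (it is quoted directly from \cite[Thm.~\TheoremOnFinitePresentationGeneralTools\PartOfThatTheoremDealingWithTwoSpherical]{Allcock}), but your argument for claim \ref{item-rk-4-and-finitely-generated-ring} has a genuine gap. The paper proves \ref{item-rk-4-and-finitely-generated-ring} by invoking \cite[Thm.~\TheoremOnFinitePresentationGeneralTools\PartOfThatTheoremDealingWithFGRings]{Allcock}, whose hypothesis --- every pair of nodes lies in some irreducible spherical subdiagram of rank${}\geq3$ --- holds for all affine diagrams of rank${}>3$ \emph{except} the family whose diagram is a path with a double bond at each end (the $\Ctilde_n$-type diagrams). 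For those, the two extreme nodes $\alpha,\delta$ lie in no irreducible spherical subdiagram, so the infinitely many commutation relations between $\U_\alpha$-type and $\U_\delta$-type generators are not absorbed by any finitely presented spherical piece; the paper needs a bespoke argument here: form the finitely presented direct limit $G$ over irreducible spherical subdiagrams of rank${}\geq2$ (Splitthoff in rank${}\geq3$, finite generation of the rank-$2$ groups by \cite[Lemma~\LemmaOnFiniteGenerationOfRankTwoSteinbergGroups]{Allcock}), impose only the finitely many relations $[Y_\alpha,Y_\delta]=1$, where $Y_\alpha$ is a finite set generating $\St_{\{\alpha,\beta\}}$ together with $\St_{\{\beta\}}$, and then deduce full commutativity of the images of $\St_{\singletonalpha}$ and $\St_{\{\delta\}}$ by a two-step centralizer argument, concluding $H\iso\PSt_A(R)$. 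Your node-removal-and-regluing scheme never confronts this case: adjoining $S_v$ and $X_v(t)$ for all $t\in R$ introduces infinitely many generators, and for the pair of end nodes of a doubly double-bonded path the commutation relations cannot be ``routed through'' any rank-$3$ spherical subdiagram because none contains both nodes. You do flag an obstacle, but you assign its resolution to \cite[Lemma~\CorollaryOnFiniteGenerationOfKernel]{Allcock}, which concerns something else entirely (replacing the torus relations \eqref{eq-torus-relations} by finitely many of them when $\Runits$ is finitely generated; the paper uses it only for the $\G_{\!A}$ addendum). Moreover your worry about $\Atilde_n$ is misplaced: there any two nodes lie in a spherical path subdiagram of rank${}\geq3$, so the general criterion applies directly; the problematic family is exactly the one your sketch does not treat.

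For the Kac-Moody addendum, your centrality computation for $c_i(u,v)=\htilde_i(u)\htilde_i(v)\htilde_i(uv)^{-1}$ is fine, but the claim that bimultiplicativity follows from ``a standard computation with the torus-action relations'' is not justified: those relations give centrality, and bimultiplicativity of such symbols is a separate matter (for simple roots attached to the rest of the diagram only by double bonds --- symplectic-type long roots --- it is not automatic), so your reduction to the finitely many elements $c_i(\e_j,\e_k)$ does not go through as stated. The paper sidesteps this by citing \cite[Lemma~\CorollaryOnFiniteGenerationOfKernel]{Allcock}, which asserts precisely that finitely many of the relations \eqref{eq-torus-relations} imply all of them when $\Runits$ is finitely generated; you should either cite that result (as the paper does) or supply the missing symbol calculus.
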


One of the main motivations for Splitthoff's work was to understand
when the Chevalley-Demazure groups, over Dedekind
domains of interest in number theory, are finitely presented.  This
was finally settled by Behr
\cite{Behr-number-field-case}\cite{Behr-function-field-case},
capping a long series of works by many authors.  The following
analogue of these results follows immediately from
theorem~\ref{thm-finite-presentation-of-Steinberg}.  How close the
analogy is depends on how well $\G_{\!A}$ approximates whatever plays
the role of the Chevalley-Demazure group scheme in the setting of
Kac-Moody theory.

\begin{corollary}[Finite presentation in arithmetic contexts]
\label{cor-Kac-Moody-finite-presentation}
Suppose $K$ is a global field, meaning a finite extension of $\Q$ or
$\F_q(t)$.  Suppose $S$ is a nonempty finite set of places of $K$, including
all  infinite places in the number field case.  Let $R$ be the ring of
$S$-integers in $K$.

Suppose $A$ is an  affine 
Dynkin diagram.  Then Tits' Kac-Moody group $\G_{\!A}(R)$ is finitely
presented if 
\begin{enumerate}
\item
\label{item-rk-bigger-than-3-and-function-field-and-S=2}
$\rk A>3$ when $K$ is a function field and $|S|=1$;
\item
\label{item-rk-3-otherwise}
$\rk A\geq3$ otherwise.
\qed
\end{enumerate}
\end{corollary}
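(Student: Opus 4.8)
The plan is to derive this from Theorem~\ref{thm-finite-presentation-of-Steinberg}; the work lies entirely in checking that the ring $R=\mathcal{O}_{K,S}$ of $S$-integers has the finiteness properties that theorem requires. I will use three classical facts about such rings: (a) $R$ is finitely generated as a ring, being a finitely generated algebra over $\Z$ in the number-field case and over $\F_q$ in the function-field case; (b) the unit group $R^*$ is finitely generated as an abelian group, by Dirichlet's $S$-unit theorem ($R^*\iso\mu_K\times\Z^{|S|-1}$) and its function-field analogue ($R^*\iso\F_q^*\times\Z^{|S|-1}$); and (c) unless $K$ is a function field with $|S|=1$, the ring $R$ is finitely generated as a module over a subring of $R$ that is generated by finitely many units.

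Granting (a)--(c), the corollary follows immediately. If $\rk A>3$ then $\St_A(R)$ is finitely presented by claim~\ref{item-rk-4-and-finitely-generated-ring} of Theorem~\ref{thm-finite-presentation-of-Steinberg} together with~(a). If $\rk A=3$ --- a conclusion the corollary asserts only when $K$ is a number field or $|S|\geq2$ --- then $\St_A(R)$ is finitely presented by claim~\ref{item-rk-3-and-module-finite-over-suitable-subring} together with~(c). In either case $R^*$ is finitely generated abelian by~(b), so the final sentence of Theorem~\ref{thm-finite-presentation-of-Steinberg} shows $\G_{\!A}(R)$ is finitely presented.

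Facts (a) and (b) are standard, so only (c) needs an argument, and for this one exhibits an explicit unit-generated subring $R_0\sset R$ with $R$ module-finite over $R_0$. When $K$ is a number field, let $p_1,\dots,p_k$ be the rational primes below the finite places of $S$ (possibly $k=0$) and put $R_0=\Z[1/(p_1\cdots p_k)]$, generated by the units $1/p_i$; it is Noetherian with fraction field $\Q$, its integral closure in $K$ is $\mathcal{O}_K[1/(p_1\cdots p_k)]$, and that integral closure contains $R$ and is module-finite over $R_0$ since $\mathcal{O}_K$ is module-finite over $\Z$, whence the intermediate ring $R$ is module-finite over $R_0$ as well. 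When $K$ is a function field and $|S|\geq2$, use that $\mathrm{Pic}^0(C)$ is finite for the smooth projective model $C$ of $K$, so some positive multiple of any degree-zero divisor supported on $S$ is principal; taking such a divisor with support all of $S$ produces a non-constant unit $u\in R^*$ whose zeros and poles together exhaust $S$. Set $R_0=\F_q[u,u^{-1}]$, generated by units; computing its integral closure in $K$ (invert $u$ in the integral closure of $\F_q[u]$) shows this integral closure is the ring of functions regular away from the support of $\mathrm{div}(u)$, which is exactly $\mathcal{O}_{K,S}=R$. By the classical finiteness theorem for integral closures of finitely generated domains over a field in finite field extensions, $R$ is module-finite over $R_0$.

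The hypothesis that genuinely must be strengthened --- and the point where the construction in~(c) breaks --- is the function-field case with $|S|=1$: there $R^*=\F_q^*$, so the largest unit-generated subring is $\F_q$, over which $R$ is not module-finite, forcing one to assume $\rk A>3$ and invoke only claim~\ref{item-rk-4-and-finitely-generated-ring}. I expect the only even mildly delicate step to be the construction in~(c) for $|S|\geq2$, whose inputs beyond routine commutative algebra are the finiteness of $\mathrm{Pic}^0(C)$ (equivalently of the $S$-class group) and the finiteness of integral closure.
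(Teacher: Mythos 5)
Your route is the paper's (implicit) one---the corollary is asserted to follow immediately from Theorem~\ref{thm-finite-presentation-of-Steinberg}, so the entire content is your facts (a)--(c)---and facts (a), (b) and the function-field half of (c) are fine. The number-field construction in (c), however, is wrong as written. The element $1/p_i$ lies in $R=\mathcal{O}_{K,S}$ (and is then a unit of $R$) only when \emph{every} place of $K$ above $p_i$ belongs to $S$; if some place above $p_i$ is missing from $S$, then $1/p_i\notin R$ at all. For instance take $K=\Q(i)$ and $S=\{\infty,(2+i)\}$: then $R=\Z[i][(2+i)^{-1}]$, the only relevant rational prime is $p_1=5$, and $1/5\notin R$ because it has a pole at the place $(2-i)\notin S$. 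In such cases $R_0=\Z[1/(p_1\cdots p_k)]$ is not a subring of $R$ and its generators are not units of $R$, so the hypothesis of claim~\ref{item-rk-3-and-module-finite-over-suitable-subring} is not verified; worse, $R$ is not even an $R_0$-module (it is not closed under multiplication by $1/p_i$), so your step ``$R$ is intermediate between $R_0$ and its integral closure, hence module-finite over $R_0$'' collapses---$R$ is merely a subset, not an $R_0$-submodule, of $\mathcal{O}_K[1/(p_1\cdots p_k)]$.

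The repair is the exact analogue of your own $\mathrm{Pic}^0$ argument: use finiteness of the class group of $K$. For the finite places $\mathfrak{p}_1,\dots,\mathfrak{p}_m\in S$ choose $\pi_i\in\mathcal{O}_K$ with $(\pi_i)=\mathfrak{p}_i^{h_i}$, where $h_i$ is the order of $\mathfrak{p}_i$ in the class group. Each $\pi_i$ is then a unit of $R$, and one checks $R=\mathcal{O}_K[1/(\pi_1\cdots\pi_m)]$ (an element of $R$, having poles only at the $\mathfrak{p}_i$, becomes integral after multiplication by a suitable power of $\pi_1\cdots\pi_m$). Since $\mathcal{O}_K$ is module-finite over $\Z$, the ring $R$ is module-finite over $\Z[1/(\pi_1\cdots\pi_m)]$, hence over the larger subring $\Z[\pi_1^{\pm1},\dots,\pi_m^{\pm1}]\subseteq R$, which is generated by the finitely many units $\pi_i^{\pm1}$ of $R$. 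With this substitution your verification of (c) is correct, and the rest of your argument (the reduction to Theorem~\ref{thm-finite-presentation-of-Steinberg}, the use of the $S$-unit theorem for $\G_{\!A}(R)$, and the explanation of why the function-field case with $|S|=1$ forces $\rk A>3$) matches what the paper leaves to the reader.
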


We  remark that if $R$ is a field then the $\G_{\!A}$ case of
corollary~\ref{cor-Steinberg-as-direct-limit} is due to Abramenko-M\"uhlherr
\cite{Abramenko-Muhlherr}\cite{Devillers-Muhlherr}.  Namely, suppose $A$ is any
generalized Cartan matrix which is $2$-spherical, and that $R$ is a field (but not
$\F_2$ if $A$ has a double bond, and neither $\F_2$ nor $\F_3$ if $A$
has a multiple bond).  Then
$\G_{\!A}(R)$ is the direct limit of the groups $\G_{\!B}(R)$.  
Abramenko-M\"uhlherr
\cite[p.~702]{Abramenko-Muhlherr} state that if $A$ is
affine then one can remove the restrictions $R\neq\F_2,\F_3$.

One of our goals in this work is to bring Kac-Moody groups into the
world of geometric and combinatorial group theory, which mostly
addresses finitely presented groups.  For example: which Kac-Moody
groups admit classifying spaces with finitely many cells below some
chosen dimension?  What other finiteness properties do they have?  Do
they have Kazhdan's property~$T$? What isoperimetric inequalities do
they satisfy in various dimensions?  Are there (non-split) Kac-Moody
groups over local fields whose uniform lattices (suitably defined) are
word hyperbolic?  Are some Kac-Moody groups (or classes of them)
quasi-isometrically rigid?  We find the last question very attractive,
since the corresponding answer
\cite{Eskin-Farb}\cite{Farb-Schwartz}\cite{Kleiner-Leeb}\cite{Schwartz}
for lattices in Lie groups is deep.

Regarding property $T$ we would like to mention work of
Hartnick-K\"ohl \cite{Hartnick-Kohl}, who show that many Kac-Moody
groups over local fields have property~$T$ when equipped with the
Kac-Peterson topology. Also, Shalom \cite{Shalom} and Neuhauser
\cite{Neuhauser} respectively showed that the loop groups of (i.e., the spaces of continuous
maps from $S^1$ to) $\SL_n(\C)$ and $\Sp_{2n}(\C)$ have property~$T$.

The author is very grateful to the Japan Society for the Promotion of
Science and to Kyoto University, for their support and hospitality.
He would also like to thank Lisa Carbone and the referees for very
helpful comments on earlier versions of the paper.

\section{Presentation of the pre-Steinberg group $\PSt_A(R)$}
\label{sec-the-presentation}

\noindent
Suppose $R$ is any commutative ring and $A$ is any generalized Cartan
matrix.  Write $I$ for the set of $A$'s nodes, and for $i,j\in I$
write $m_{i j}$ for the order of the product of the corresponding
generators of the Weyl group.  
Following
\cite[\S\PreSteinbergSection]{Allcock} the pre-Steinberg group $\PSt_A(R)$ is defined by
the following presentation.  The generators are $S_i$ and $X_i(t)$
with $t\in R$.  The relations are \eqref{eq-additivity-in-simple-root-groups}--\eqref{eq-second-torus-action-relation} below, in which
$i,j$ vary over $I$ and $t,u$ vary over~$R$.  We use the notation
$Y\commutes Z$ to say that $Y$ and $Z$ commute.

If $A$ has no $A_1$ components and is $2$-spherical (all $m_{i j}$'s
are finite), then the last two relations \eqref{eq-first-torus-action-relation}--\eqref{eq-second-torus-action-relation} follow
from the others and may be omitted \cite[Remark~\RemarkOnOmittingTorusActionRelations]{Allcock}.  If $A$ is
affine of rank${}\geq3$ then it satisfies this condition, and
our main result (theorem~\ref{thm-presentations}) is
that the presentation equally well defines the Steinberg group $\St_A(R)$.

\def\FOOleft{130pt}
\def\FOOright{150pt}
\def\FOOequals#1#2{
\kern\FOOleft\mathllap{#1}&{}=\mathrlap{#2}\kern\FOOright
}
\def\FOOcommutes#1#2{
\kern\FOOleft\mathllap{#1}&{}\commutes\mathrlap{#2}\kern\FOOright
}

For every $i\in I$ we impose the relations
\begin{align}
\label{eq-additivity-in-simple-root-groups}
\FOOequals{X_i(t)X_i(u)}{X_i(t+u)}
\\
\label{eq-equality-of-S-i-with-complicated-word}
\FOOequals{S_i}{X_i(1) S_i X_i(1) S_i^{-1} X_i(1)}
\\
\noalign{For all $i,j$ we impose the relations}
\label{eq-action-of-S-i-2-on-S-j}
\FOOequals{S_i^2 S_j S_i^{-2}}{S_j^{(-1)^{A_{i j}}}}
\\
\label{eq-action-of-S-i-2-on-X-j}
\FOOequals{S_i^2 X_j(t) S_i^{-2}}{X_j\bigl((-1)^{A_{i j}}t\bigr)}
\\
\noalign{Whenever $m_{i j}=2$ we impose the relations}
\label{eq-2-Artin-relation}
\FOOequals{S_i S_j}{S_j S_i}
\\
\label{eq-2-interaction-of-S-i-and-X-j}
\FOOcommutes{S_{i}}{X_{j}(t)}
\\
\label{eq-2-Chevalley-relation}
\FOOcommutes{X_{i}(t)}{X_{j}(u)}
\\
\noalign{Whenever $m_{i j}=3$ we impose the relations}
\label{eq-3-Artin-relation}
\FOOequals{S_i S_j S_i}{S_j S_i S_j}
\\
\label{eq-3-interaction-of-S-j-S-i-and-X-j-X-i}
\FOOequals{S_{j} S_{i} X_{j}(t)}{X_{i}(t) S_{j} S_{i}}
\\
\label{eq-3-Chevalley-close-roots}
\FOOcommutes{X_{i}(t)}{S_{i} X_{j}(u) S_{i}^{-1}}
\\
\label{eq-3-Chevalley-distant-roots}
\FOOequals{[X_{i}(t),X_{j}(u)]}{S_{i} X_{j}(t u) S_{i}^{-1}}
\end{align}

Whenever $m_{i j}=4$ we impose the following relations; in
\eqref{eq-4-Chevalley-close-short-and-long}--\eqref{eq-4-Chevalley-distant-short-and-long},
$s$ resp.\ $l$ refers to whichever of $i$ and $j$ is the shorter
resp.\ longer root.
\begin{align}
\label{eq-4-Artin-relation}
\FOOequals{S_i S_j S_i S_j}{S_j S_i S_j S_i}
\\
\label{eq-4-S-i-S-j-S-i-commutes-with-X-j}
\FOOcommutes{S_{i} S_{j} S_{i}}{X_{j}(t)}
\\
\label{eq-4-Chevalley-close-short-and-long}
\FOOcommutes{S_{s} X_{l}(t) S_{s}^{-1}}{S_{l} X_{s}(u) S_{l}^{-1}}
\\
\label{eq-4-Chevalley-orthogonal-long}
\FOOcommutes{X_{l}(t)}{S_{s} X_{l}(u) S_{s}^{-1}}
\\
\label{eq-4-Chevalley-orthogonal-short}
\FOOequals{[X_{s}(t),S_{l} X_{s}(u) S_{l}^{-1}]}{S_{s} X_{l}(-2 t u)
  S_{s}^{-1}}
\\
\label{eq-4-Chevalley-distant-short-and-long}
\FOOequals{[X_{s}(t),X_{l}(u)]}{S_{l} X_{s}(-t u) S_{l}^{-1}\cdot S_{s} X_{l}(t^{2} u) S_{s}^{-1}}
\end{align}

Whenever $m_{i j}=6$ we impose the following relations; $s$ and $l$
have the same meaning they had in the previous paragraph.
\begin{align}
\label{eq-6-Artin-relation}
\FOOequals{S_i S_j S_i S_j S_i S_j}{S_j S_i S_j S_i S_j S_i}
\\
\label{eq-6-S-i-S-j-S-i-S-j-S-i-commutes-with-X-j}
\FOOcommutes{S_{i} S_{j} S_{i} S_{j} S_{i}}{X_{j}(t)}
\\
\label{eq-6-Chevalley-close-long}
\FOOcommutes{X_{l}(t)}{S_{l} S_{s} X_{l}(u) S_{s}^{-1} S_{l}^{-1}}
\\
\label{eq-6-Chevalley-adjacent-short-and-long}
\FOOcommutes{S_{s} S_{l} X_{s}(t) S_{l}^{-1} S_{s}^{-1}}{S_{l} S_{s}
  X_{l}(u) S_{s}^{-1} S_{l}^{-1}}
\\
\label{eq-6-Chevalley-orthogonal-short-and-long}
\FOOcommutes{S_{s} X_{l}(t) S_{s}^{-1}}{S_{l} X_{s}(u) S_{l}^{-1}}
\\
\label{eq-6-Chevalley-distant-long}
\FOOequals{[X_{l}(t),S_{s} X_{l}(u) S_{s}^{-1}]}{S_{l} S_{s} X_{l}(t
  u) S_{s}^{-1} S_{l}^{-1}}
\\
\label{eq-6-Chevalley-close-short}
\FOOequals{[X_{s}(t),S_{s} S_{l} X_{s}(u) S_{l}^{-1}
    S_{s}^{-1}]}{S_{s} X_{l}(3 t u) S_{s}^{-1}}
\\
\label{eq-6-Chevalley-distant-short}
\FOOequals{[X_{s}(t),S_{l} X_{s}(u) S_{l}^{-1}]}{S_{s} S_{l} X_{s}(-2 t u) S_{l}^{-1} S_{s}^{-1}\cdot{}}
\\
\notag
{}\cdot S_{s} X_{l}(-3 &t^{2} u) S_{s}^{-1} 
\cdot
S_{l} S_{s} X_{l}(-3 t u^{2}) S_{s}^{-1} S_{l}^{-1}
\\
\label{eq-6-Chevalley-distant-short-and-long}
\FOOequals{[X_{s}(t),X_{l}(u)]}{ S_{s} S_{l} X_{s}(t^{2} u) S_{l}^{-1}
  S_{s}^{-1}\cdot{} }
\\
\notag
{}\cdot S_{l} X_{s}(-t u) S_{l}^{-1} \cdot S_{s} X_{l}(&t^{3} u) S_{s}^{-1} \cdot
S_{l} S_{s} X_{l}(-t^{3} u^{2}) S_{s}^{-1} S_{l}^{-1}
\end{align}

Officially, the next two relations \eqref{eq-first-torus-action-relation}--\eqref{eq-second-torus-action-relation} are part of the
presentation of $\PSt_A(R)$.  But as mentioned above, they may be
omitted if $A$ is $2$-spherical without $A_1$ components.  We let $r$
vary over the units of $R$ and impose the relations
\begin{align}
\label{eq-first-torus-action-relation}
\FOOequals{\htilde_{i}(r) X_{j}(t)
  \htilde_{i}(r)^{-1}}{X_{j}\bigl(r^{A_{i j}} t\bigr)}
\\
\label{eq-second-torus-action-relation}
\FOOequals{\htilde_{i}(r) \, S_{j} X_{j}(t) S_{j}^{-1} \,
  \htilde_{i}(r)^{-1}}{
S_{j} X_{j}\bigl(r^{-A_{i j}} t\bigr) S_{j}^{-1}}
\end{align}
where $\htilde_i(r)$ was defined in theorem~\ref{thm-presentations}.

Because we have organized the relations differently than we did in
\cite{Allcock}, we will state the correspondence explicitly:
\eqref{eq-additivity-in-simple-root-groups}=\cite[\RelationsDefiningRootGroups]{Allcock}. 
\eqref{eq-equality-of-S-i-with-complicated-word}=\cite[\EqualityOfSiWithComplicatedWord]{Allcock}.
\eqref{eq-action-of-S-i-2-on-S-j}=\cite[\FirstActionOfSiSiOnSj--\SecondActionOfSiSiOnSj]{Allcock}.
\eqref{eq-action-of-S-i-2-on-X-j}=\cite[\ActionOfSiSiOnXj]{Allcock}. 
\eqref{eq-2-Artin-relation}$\,\cup\,$\eqref{eq-3-Artin-relation}$\,\cup\,$\eqref{eq-4-Artin-relation}$\,\cup\,$\discretionary{}{}{}\eqref{eq-6-Artin-relation}=\cite[\ArtinRelations]{Allcock}.
\eqref{eq-2-interaction-of-S-i-and-X-j}=\cite[\mEqualsTwoSandXinteraction]{Allcock}.
\eqref{eq-2-Chevalley-relation}=\cite[\ChevAoneAone]{Allcock}, the $A_1^2$ Chevalley relation.
\eqref{eq-3-interaction-of-S-j-S-i-and-X-j-X-i}=\cite[\mEqualsThreeSandXinteraction]{Allcock}.
\eqref{eq-3-Chevalley-close-roots}--\eqref{eq-3-Chevalley-distant-roots}=\cite[\FirstChevAtwo--\LastChevAtwo]{Allcock}, the
$A_2$ Chevalley relations.
\eqref{eq-4-S-i-S-j-S-i-commutes-with-X-j}=\cite[\mEqualsFourSandXinteraction]{Allcock}.
\eqref{eq-4-Chevalley-close-short-and-long}--\eqref{eq-4-Chevalley-distant-short-and-long}=\cite[\FirstChevBtwo--\LastChevBtwo]{Allcock}, the
$B_2$ Chevalley relations.
\eqref{eq-6-S-i-S-j-S-i-S-j-S-i-commutes-with-X-j}=\cite[\mEqualsSixSandXinteraction]{Allcock}.
\eqref{eq-6-Chevalley-close-long}--\eqref{eq-6-Chevalley-distant-short-and-long}=\cite[\FirstChevGtwo--\LastChevGtwo]{Allcock}, the
$G_2$ Chevalley relations.
\eqref{eq-first-torus-action-relation}=\cite[\FirstTorusActionRelations]{Allcock}.
\eqref{eq-second-torus-action-relation}=\cite[\SecondTorusActionRelations]{Allcock}.


\section{Steinberg and pre-Steinberg Groups}
\label{sec-background}

\noindent
Our goal in this section is to describe the Steinberg group and to
give a second description of the pre-Steinberg group.  This
description makes visible its natural map to the Steinberg group, and
is the form we will use for our calculations in section~\ref{sec-isomorphism-for-affine-groups}.

We work in the setting of \cite{Tits} and \cite{Allcock}, so $R$ is a
commutative ring and $A$ is a generalized Cartan matrix.  This matrix
determines a complex Lie algebra $\g$ called the Kac-Moody algebra,
and we write $\Phi$ for the set of real roots of $\g$.  For each real
root $\alpha$, its root space $\g_\alpha$ comes with a distinguished
pair of (complex vector space) generators, each the negative of the other.  We
write $\g_{\alpha,\Z}$ for their integral span, and define the root
group $\U_\alpha$ as $\g_{\alpha,\Z}\tensor R\iso R$.  
Tits'
definition of the Steinberg group begins with the free
product $\freeproduct_{\alpha\in\Phi}\,\U_\alpha$. 

We emphasize that there is no natural way to choose an isomorphism
$R\to\U_{\alpha}$.  If $\{\pm e\}$ are the two distinguished
generators for $\g_\alpha$, then there are two natural choices for the
parameterization of $\U_\alpha$, namely $t\mapsto(\pm e)\tensor t$.
Often we will choose one of these and call it $X_{\alpha}$; we speak of this
as a ``sign choice''.  Making such a choice sometimes simplifies
computations, but sometimes it is better to treat both possibilities with
equal respect.

In Tits' definition of $\St_A(R)$, the relations have the following form.
He calls a pair $\alpha,\beta\in\Phi$ {\it prenilpotent\/} if some
element of the Weyl group $W$ sends both $\alpha,\beta$ to positive
roots, and some other element of $W$ sends both to negative roots.  A
consequence of this condition is that every root in $\N\alpha+\N\beta$
is real, which enabled Tits to write down Chevalley-style relators for
$\alpha,\beta$.  That is, for every prenilpotent pair $\alpha,\beta$
he imposes relations of the form
\begin{equation}
\label{eq-form-of-Chevalley-relations}
\bigl[\hbox{element of $\U_\alpha$},\hbox{element of $\U_\beta$}\bigr]
=
\prod_{\gamma\in\theta(\alpha,\beta)-\{\alpha,\beta\}}
\!\!\!\!\!\!(\hbox{element of $\U_\gamma$})
\end{equation}
where $\theta(\alpha,\beta):=(\N\alpha+\N\beta)\cap\Phi$ and
$\N=\{0,1,2,\dots\}$.  The exact relations are given in a rather
implicit form in \cite[\S3.6]{Tits}.  Writing them down explicitly
requires choosing parameterizations of 
$\U_\alpha$, $\U_\beta$ and each~$\U_\gamma$.  
We suppose this has been done as above, with  the parameterizations
being  $X_\alpha$, $X_\beta$ and
the various $X_\gamma$.  Then the relations
take the form
\begin{equation}
\label{eq-explicit-form-of-Chevalley-relations}
\bigl[X_\alpha(t),X_\beta(u)]
=
\prod_{
\displaystyle\mathop{\scriptstyle{\rm
    roots}\,\,\gamma=m\alpha+n\beta}_{{\rm with}\,\,m,n\geq1}
}
\!\!\!\!\!\!
X_\gamma\bigl(N_{\alpha\beta\gamma}\,t^m u^n\bigr)
\end{equation}
where the $N_{\alpha\beta\gamma}$ are integers determined by the
structure constants of $\g$, the sign choices made in parameterizing
the root groups, and the ordering of the terms on the right side.  See
\cite[\S\S3.4--3.6]{Tits} for details, or section~\ref{sec-isomorphism-for-affine-groups} for the cases we will
need.  Morita showed that the right side has at most~$1$ term
except when $(\Q\alpha\oplus\Q\beta)\cap\Phi$ has type $B_2$ or $G_2$,
and found simple formulas for the constants (up to sign).  See
\cite{Morita-commutator-relations} and \cite{Morita-root-strings}.

For Tits, this is the end of the definition of the Steinberg group.
We called this group $\StTits_A(R)$ in \cite{Allcock}, to avoid
confusion with $\St_A(R)$ itself, which we take to also satisfy the
Morita-Rehmann relations.  These extra relations play the role of
making the ``maximal torus'' and ``Weyl group'' in $\StTits_A(R)$ act
in the expected way on root spaces.  These relations follow from the
Chevalley relations when $A$ is $2$-spherical without $A_1$
components, so the reader could skip down to the definition of
$\G_{\!A}(R)$.

Here is a terse description of the Morita-Rehmann relations; see
\cite[relations (B$'$)]{Morita-Rehmann} or \cite[\S\SteinbergSection]{Allcock} for more details.  For each simple root
$\alpha\in\Phi$ and each of the two choices $e$ for a generator of
$\g_{\alpha,\Z}$, we impose relations as follows. 
By a standard
construction, the choice of $e$ distinguishes a generator $f$ for
$\g_{-\alpha,\Z}$.  Using $e$ and $f$ as above, we obtain
parameterizations of $\U_\alpha$ and $\U_{-\alpha}$ which we will call
$X_e$ and $X_f$.  For $r\in\Runits$ we define
$\stilde_e(r)=X_e(r)X_f(1/r)X_e(r)$ and
$\htilde_e(r)=\stilde_e(r)\stilde_e(-1)$.  Morita and Rehmann impose
relations that describe the actions of $\stilde_e(1)$ and
$\htilde_e(r)$ on every $\U_\beta$, where $\beta$ varies over $\Phi$.
First, conjugation by $\stilde_e(1)$ sends $\U_\beta$ to
$\U_{s_\alpha(\beta)}$ in the same way that
$\sstar_e:=(\exp\ad_e)(\exp\ad_f)(\exp\ad_e)\in\Aut\g$ does.  (Here
$s_\alpha$ is the reflection in $\alpha$, and for the relation to make
sense one must check that $\sstar_e$ sends $\g_{\beta,\Z}$ to
$\g_{s_\alpha(\beta),\Z}$.)  Second, every $\htilde_e(r)$ acts on
$\U_\beta\iso R$ by scaling by $r^{\langle\alpha^\vee,\beta\rangle}$,
where $\alpha^\vee$ is the coroot associated to~$\alpha$.

The quotient of $\StTits_A(R)$ by all these relations is the
definition of the Steinberg group $\St_A(R)$, and agrees with
Steinberg's original group when $A$ is spherical.  We remark that we
let $e$ vary over both possible choices of generator for
$\g_{\alpha,\Z}$ just to avoid choosing one.  But one could choose one
without harm, because it turns out that the relations imposed for $e$
are the same as those imposed for $-e$.  Also, Morita and Rehmann
write $\what_\alpha$ rather than $\stilde_e$, and their definition of
it uses $X_f(-1/r)$ rather than $X_f(1/r)$.  This sign merely reflects
the fact that they use a different sign on $f$ than Tits does, in the
``standard'' basis $e,f,h$ for $\sltwo$.

The Kac-Moody group $\G_{\!A}(R)$ is defined as the quotient of
$\St_A(R)$ by the relations \eqref{eq-torus-relations}.

\medskip
In section~\ref{sec-the-presentation} we defined the pre-Steinberg group $\PSt_A(R)$ in
terms of generators and relations.  But it also has an ``intrinsic''
definition: the same as $\St_A(R)$, except that Tits'
Chevalley relations are imposed only for
{\it classically nilpotent} pairs $\alpha,\beta$.  This means that
$(\Q\alpha+\Q\beta)\cap\Phi$ is finite
and $\alpha+\beta\neq0$.  This is equivalent to
$\alpha,\beta$ satisfying $\alpha+\beta\neq0$ and lying in some
$A_1$, $A_1^2$, $A_2$, $B_2$ or $G_2$ root system.  As the name suggests,
such a pair is prenilpotent.  So $\PSt_A(R)$ is defined the same way
as $\St_A(R)$, just omitting the Chevalley relations for prenilpotent
pairs that are not classically prenilpotent.  In particular,
$\St_A(R)$ is a quotient of $\PSt_A(R)$, hence the prefix ``pre-''.

In \cite{Allcock} we defined $\PSt_A(R)$ this way, and then showed
that it is has the presentation in
section~\ref{sec-the-presentation}.  In this paper, for ease of exposition we defined
$\PSt_A(R)$ 
by this presentation.  But we will use the above ``intrinsic''
description in the proof of theorem~\ref{thm-presentations}.  So equality between the
two versions of $\PSt_A(R)$ is essential for our work.  We proved this
in \cite[Thm.\ \TheoremOnPresentingPSt]{Allcock}, which we restate as follows:

\begin{theorem}[The two models of $\PSt_A(R)$]
\label{thm-two-models-of-pre-Steinberg}
Let $A$ be a generalized Cartan matrix and $R$ a commutative ring.
For each simple root $\alpha_i$, choose one of the two distinguished
parameterizations $X_{e_i}:R\to\U_{\alpha_i}$.  Then the pre-Steinberg
group as defined in section~\ref{sec-the-presentation} is isomorphic to the pre-Steinberg
group as defined above, by $S_i\mapsto\stilde_{e_i}(1)$ and
$X_i(t)\mapsto X_{e_i}(t)$.
\qed
\end{theorem}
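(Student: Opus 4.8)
The plan is to compare the two descriptions of $\PSt_A(R)$ by producing mutually inverse homomorphisms, in the style of a Tietze-transformation argument; write $\PSt^{\mathrm{pres}}$ for the group presented in section~\ref{sec-the-presentation} and $\PSt^{\mathrm{intr}}$ for the intrinsic one --- the quotient of $\freeproduct_{\alpha\in\Phi}\U_\alpha$ by Tits' Chevalley relations for the classically nilpotent pairs and by the Morita--Rehmann relations. The routine direction is to build $\phi\colon\PSt^{\mathrm{pres}}\to\PSt^{\mathrm{intr}}$ with $S_i\mapsto\stilde_{e_i}(1)$ and $X_i(t)\mapsto X_{e_i}(t)$, by checking that these elements satisfy \eqref{eq-additivity-in-simple-root-groups}--\eqref{eq-second-torus-action-relation}. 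Here \eqref{eq-additivity-in-simple-root-groups} is additivity in $\U_{\alpha_i}\iso R$; \eqref{eq-equality-of-S-i-with-complicated-word} is a rank-one $\sltwo$ identity; relation \eqref{eq-action-of-S-i-2-on-S-j}, the Artin relations \eqref{eq-2-Artin-relation}, \eqref{eq-3-Artin-relation}, \eqref{eq-4-Artin-relation}, \eqref{eq-6-Artin-relation}, the relations \eqref{eq-action-of-S-i-2-on-X-j}, \eqref{eq-2-interaction-of-S-i-and-X-j}, \eqref{eq-3-interaction-of-S-j-S-i-and-X-j-X-i}, \eqref{eq-4-S-i-S-j-S-i-commutes-with-X-j}, \eqref{eq-6-S-i-S-j-S-i-S-j-S-i-commutes-with-X-j}, and the torus relations \eqref{eq-first-torus-action-relation}--\eqref{eq-second-torus-action-relation} are instances of the Morita--Rehmann relations that describe the actions of $\stilde_e(1)$ and $\htilde_e(r)$ on root groups; and the rank-$2$ Chevalley relations \eqref{eq-2-Chevalley-relation}, \eqref{eq-3-Chevalley-close-roots}--\eqref{eq-3-Chevalley-distant-roots}, \eqref{eq-4-Chevalley-close-short-and-long}--\eqref{eq-4-Chevalley-distant-short-and-long}, \eqref{eq-6-Chevalley-close-long}--\eqref{eq-6-Chevalley-distant-short-and-long} are precisely Tits' Chevalley relations for the classically nilpotent pairs inside the rank-$\leq2$ subdiagram on nodes $i,j$, rewritten by means of the Morita--Rehmann relations so that each non-simple root group of that subsystem is displayed as a conjugate of some $\U_{\alpha_k}$ by a word in the $\stilde$'s. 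This is lengthy but mechanical, and the integer coefficients match by Morita's formulas for Tits' structure constants.

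The substance lies in the reverse map $\psi\colon\PSt^{\mathrm{intr}}\to\PSt^{\mathrm{pres}}$. Because $\PSt^{\mathrm{intr}}$ is presented by the subgroups $\U_\alpha$ subject only to the classically-nilpotent Chevalley relations and the Morita--Rehmann relations, it is enough to (a) define, for every real root $\alpha$, a homomorphism $\U_\alpha\to\PSt^{\mathrm{pres}}$ that restricts to $t\mapsto X_i(t)$ when $\alpha=\alpha_i$, and then (b) verify those two families of relations. For (a), the Artin relations let one define $\what_w\in\PSt^{\mathrm{pres}}$ as the product of the $S_i$ along any reduced word for $w\in W$, well-defined by Tits' theorem that reduced words in a Coxeter group are connected by braid moves; writing $\alpha=w(\alpha_i)$, one sets $X_\alpha(t):=\what_w X_i(ct)\what_w^{-1}$ with $c=\pm1$ fixed by a chosen Chevalley pinning, and proves independence of the choice of $(w,i)$ using \eqref{eq-action-of-S-i-2-on-S-j}--\eqref{eq-action-of-S-i-2-on-X-j} to absorb the ambiguity $w\leftrightarrow ws_i$ and using \eqref{eq-3-interaction-of-S-j-S-i-and-X-j-X-i}, \eqref{eq-4-S-i-S-j-S-i-commutes-with-X-j}, \eqref{eq-6-S-i-S-j-S-i-S-j-S-i-commutes-with-X-j} to control how conjugation by a $\what_w$ relates the various simple root groups. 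For (b), each Morita--Rehmann relation is, after conjugating by a suitable $\what_w$, one of its ``standard'' rank-one instances, which holds in $\PSt^{\mathrm{pres}}$ by construction of $\what_w$ together with \eqref{eq-equality-of-S-i-with-complicated-word} and \eqref{eq-first-torus-action-relation}--\eqref{eq-second-torus-action-relation}; and the Chevalley relation for a classically nilpotent pair $\{\alpha,\beta\}$, after conjugating by a suitable $\what_w$, becomes the corresponding relation for a pair $\{w(\alpha),w(\beta)\}$ lying inside some rank-$\leq2$ subdiagram on two nodes $i,j$, where it is one of the displayed relations for that subdiagram. One then checks that $\phi$ and $\psi$ are mutually inverse, which is immediate on the generators $S_i$ and $X_i(t)$.

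I expect the main obstacle to be the root-combinatorial step hidden inside (b): one needs that every classically nilpotent pair of real roots of $\Phi$ is carried by some $w\in W$ into a rank-$\leq2$ standard parabolic subsystem, and one needs to trace Tits' structure constants --- and the sign choices $e_i$ --- through all of the conjugations by $\what_w$'s, so that the conjugated relation reproduces one of \eqref{eq-2-Chevalley-relation}--\eqref{eq-6-Chevalley-distant-short-and-long} exactly rather than merely up to an unspecified rescaling of the root groups. A persistent secondary nuisance, underlying the well-definedness arguments in (a) and (b), is that $\what_w^2\neq1$ in general, the discrepancy being exactly what relation \eqref{eq-action-of-S-i-2-on-S-j} records.
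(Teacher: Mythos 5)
You should first note what the paper actually does here: it gives no proof of this theorem at all. Theorem~\ref{thm-two-models-of-pre-Steinberg} is imported verbatim from \cite{Allcock} (Theorem~\TheoremOnPresentingPSt\ there) and stamped with a \qed, so the only thing to compare your argument with is that reference. In broad architecture your plan agrees with it: one builds mutually inverse homomorphisms, the easy direction sending $S_i\mapsto\stilde_{e_i}(1)$, $X_i(t)\mapsto X_{e_i}(t)$ and checking the listed relations, the substantive direction reconstructing every real root group inside the presented group as a conjugate of a simple one by a word $\what_w$ in the $S_i$ and then transporting the classically prenilpotent Chevalley relations and the Morita--Rehmann relations.

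As a proof, however, your proposal has genuine gaps, and they sit exactly where you say you ``expect the main obstacle'' to be -- which is to say the unproven steps are the theorem's actual content. (i) Well-definedness of $X_\alpha(t):=\what_w X_i(ct)\what_w^{-1}$ is not a consequence of Matsumoto--Tits braid moves plus the handful of relations you cite: independence of the pair $(w,i)$ with $w(\alpha_i)=\alpha$ requires controlling how the \emph{entire} stabilizer of $\alpha_i$ in $W$ acts, i.e.\ the structure of the extended Weyl group generated by the $S_i$ and its signed permutation action on the root spaces; relations \eqref{eq-action-of-S-i-2-on-S-j}--\eqref{eq-action-of-S-i-2-on-X-j}, \eqref{eq-3-interaction-of-S-j-S-i-and-X-j-X-i}, \eqref{eq-4-S-i-S-j-S-i-commutes-with-X-j}, \eqref{eq-6-S-i-S-j-S-i-S-j-S-i-commutes-with-X-j} are only rank-$\leq2$ instances, and assembling them into the general statement is a large part of \cite{Allcock}. (ii) Your step (b) rests on the claim that every classically nilpotent pair is carried by some $w\in W$ into a standard rank-$\leq2$ subsystem. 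The theorem is asserted for an \emph{arbitrary} generalized Cartan matrix, including diagrams with $m_{ij}=\infty$; there is no guarantee that the finite rank-$2$ system spanned by such a pair (say of type $A_1^2$ or $A_2$) even occurs among the two-node subdiagrams, and even when the type does occur, $W$-conjugacy of the particular pair into a standard subsystem is not automatic -- $W$-orbits of rank-$2$ subsystems are not classified by type. Without this, conjugating displayed relations produces nothing for such pairs, and one needs commutator-rewriting arguments in the spirit of section~\ref{sec-isomorphism-for-affine-groups} of this paper rather than bare transport by $\what_w$. (iii) The sign and structure-constant matching you set aside as ``lengthy but mechanical'' (and likewise the derivation of the Morita--Rehmann relations for non-simple roots from \eqref{eq-equality-of-S-i-with-complicated-word} and \eqref{eq-first-torus-action-relation}--\eqref{eq-second-torus-action-relation}) is the other bulk of the reference's argument. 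So what you have is a plausible high-level plan, consistent with the strategy of \cite{Allcock}, but not a proof.
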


\section{Nomenclature for affine root systems}
\label{sec-new-nomenclature}

\noindent
Our proof of theorem~\ref{thm-presentations}, appearing in the next
section, refers to the root system as a whole, with the simple roots
playing no special role.  It is natural in this setting to use a
nomenclature for the affine root systems that emphasizes this global
perspective.  Our notation in table~\ref{tab-root-system-names} is
close to that in Moody-Pianzola \cite[\S3.5]{Moody-Pianzola}.  The
differences are that our superscripts describe the construction
of the root systems, and that we use a tilde to indicate affineness.
For the affine root systems obtained by ``folding'', Kac'
nomenclature \cite[pp.~54--55]{Kac} emphasizes not the affine root
system itself but rather the one being folded.

\begin{table}
\begin{tabular}{lllll}
&\cite{Moody-Pianzola}
&\cite{Kac}
&condition
\\
$\Atilde{}_{n}$
&$A_n^{(1)}$
&$A_{n}^{(1)}$
&$n\geq1$
\\
$\Btilde{}_{n}$
&$B_n^{(1)}$
&$B_{n}^{(1)}$
&$n\geq2$
\\
$\Ctilde{}_{n}$
&$C_n^{(1)}$
&$C_{n}^{(1)}$
&$n\geq2$
\\
$\Dtilde{}_{n}$
&$D_n^{(1)}$
&$D_{n}^{(1)}$
&$n\geq3$
\\
$\Etilde{}_{n}$
&$E_n^{(1)}$
&$E_{n}^{(1)}$
&$n=6,7,8$
\\
$\Ftilde{}_{\!4}$
&$F_4^{(1)}$
&$F_{4}^{(1)}$
\\
$\Gtilde{}_2$
&$G_2^{(1)}$
&$G_2^{(1)}$
\\
$\Btilde{}_n^\even$
&$B_n^{(2)}$
&$D_{n+1}^{(2)}$
&$n\geq2$
\\
$\Ctilde{}_n^\even$
&$C_n^{(2)}$
&$A_{2n-1}^{(2)}$\ \ \ 
&$n\geq2$
\\
$\BCtilde{}_n^\odd$
&$BC_n^{(2)}$\ \ \ 
&$A_{2n}^{(2)}$
&$n\geq1$
\\
$\Ftilde{}_{\!4}^\even$
&$F_4^{(2)}$
&$E_6^{(2)}$
\\
$\Gtilde{}_2^\zeromodthree$\ \ \ 
&$G_2^{(3)}$
&$D_4^{(3)}$
\end{tabular}
\smallskip
\caption{Our and others' names for affine root systems; see
  section~\ref{sec-new-nomenclature}.}
\label{tab-root-system-names}
\end{table}

It is very easy to describe the set $\Phi$ of real roots in the root
system $\Xtilde_n^{\cdots}$.  Let $\Phibar$ be a root system of type
$X_n$, let $\Lambdabar$ be its root lattice, and let $\Lambda$ be
$\Lambdabar\oplus\Z$.  Then $\Phi\sset\Lambda$ is the set of pairs
$(\hbox{root of $X_n$},\discretionary{}{}{} m\in\Z)$ satisfying the
condition that if the root is long then $m$ has the property
``$\cdots$'' indicated in the superscript, if any.

A set of simple roots can be described as follows.  We begin with a
set of simple roots for the root system $\Phi_0\sset\Phi$ consisting
of roots of the form
$(\alphabar,0)$.  This is an $X_n$ root system except for
$\BCtilde{}_n^\odd$, when it has type $B_n$.  The last simple
root is $(\alphabar,1)$, where $\alphabar$ is the lowest root of
$\Phi_0$ in the
absence of a superscript, or twice the lowest short root for
$\BCtilde{}_n^\odd$, or the lowest short root in all other cases.
This can be used to verify the correspondences between our
nomenclature and those of Kac and Moody.

The condition on $n$ in table~\ref{tab-root-system-names} is the weakest condition for
which the definition of $\Xtilde{}_n^{\cdots}$ makes sense.  If one wishes to avoid
duplication, so that each isomorphism class of affine root system
appears exactly once, then one should omit one of
$\Atilde{}_3\iso\Dtilde{}_3$, one of $\Btilde{}_2\iso\Ctilde{}_2$ and
one of $\Btilde{}_2^\even\iso\Ctilde{}_2^\even$.  Both \cite{Kac} and
\cite{Moody-Pianzola} omit $\Dtilde{}_3$, $\Btilde{}_2$ and
$\Ctilde{}_2^\even$.  Also, \cite{Moody-Pianzola} gives $A_1^{(2)}$ as an
alternate name for $BC_1^{(2)}$.


\section{The isomorphism $\PSt_A(R)\to\St_A(R)$}
\label{sec-isomorphism-for-affine-groups}

\noindent 
This section is devoted to proving theorem~\ref{thm-presentations},
whose hypotheses we assume throughout.  
In light of theorem~\ref{thm-two-models-of-pre-Steinberg},
our goal is to show that the
Chevalley relations for the classically prenilpotent pairs imply those
of the remaining prenilpotent pairs.  We will begin by saying which
pairs of real roots are prenilpotent and which are classically
prenilpotent.  Then we will analyze the pairs that are prenilpotent
but not classically prenilpotent.

We fix the affine Dynkin diagram $A$, write
$\Phi,\Phibar,\Lambda,\Lambdabar$ as in
section~\ref{sec-new-nomenclature}, and use an overbar to indicate
projections of roots from $\Phi$ to $\Phibar$.  
It is easy to see that
$\alpha,\beta\in\Phi$ are classically prenilpotent just if they are
equal or their
projections $\alphabar,\betabar\in\Phibar$ are linearly independent.
The following lemma describes which pairs of roots are prenilpotent
but not classically prenilpotent, and what their Chevalley relations
are (except for one special case discussed later).

\begin{lemma}
\label{lem-prenilpotent-but-not-classically-prenilpotent-pairs}
The following are
equivalent:
\begin{enumerate}
\item
\label{item-prenilpotent-but-not-classically-prenilpotent}
$\alpha,\beta$ are prenilpotent but
not classically prenilpotent;
\item
\label{item-projections-differ-by-positive factor}
$\alpha\neq\beta$ and 
$\alphabar,\betabar$ differ by a
positive scalar factor;
\item
\label{item-projections-are-equal-except-for-special-case}
$\alpha\neq\beta$, and either
$\alphabar,\betabar$ are equal, or else one is twice the other and
  $\Phi=\BCtilde{}_n^\odd$.
\end{enumerate}
When these equivalent conditions hold, the Chevalley relations between
$\U_\alpha,\U_\beta$ are
$[\U_\alpha,\U_\beta]=1$, unless $\Phi=\BCtilde{}_n^\odd$, $\alphabar$
and $\betabar$ are the same short root of $\Phibar=BC_n$, and
$\alpha+\beta\in\Phi$.
\end{lemma}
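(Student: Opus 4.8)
The plan is to analyze the lemma in three parts: first the equivalence of the three conditions, then the special-case structure of the root system $\BCtilde{}_n^\odd$, and finally the identification of the Chevalley relations. For the equivalences, I would work entirely in the lattice $\Lambda=\Lambdabar\oplus\Z$, using the explicit description of $\Phi$ from section~\ref{sec-new-nomenclature}. The implication \eqref{item-prenilpotent-but-not-classically-prenilpotent}$\Rightarrow$\eqref{item-projections-differ-by-positive factor} follows because non-classical-prenilpotence forces $\alphabar,\betabar$ linearly dependent (by the criterion just stated before the lemma), and prenilpotence of $\alpha,\beta$ rules out the possibility $\betabar=-c\,\alphabar$ with $c>0$: if the projections were antiparallel then (using that $\Phi$ contains $(\bar\gamma,m)$ for all integers $m$, at least for short $\bar\gamma$, and that long roots only carry restricted $m$'s) one of $\alpha+\beta$ or a small $\N$-combination would be zero or the pair would fail the prenilpotence sign condition. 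The implications among \eqref{item-projections-differ-by-positive factor} and \eqref{item-projections-are-equal-except-for-special-case} are a finite check on which scalar ratios occur: in a reduced system $\Phibar$ the only positive ratios between proportional roots are $1$, while in the non-reduced $BC_n$ the ratios $1$, $2$, $1/2$ occur — hence ``$\alphabar=\betabar$, or one is twice the other in the $\BCtilde{}_n^\odd$ case.'' For \eqref{item-projections-are-equal-except-for-special-case}$\Rightarrow$\eqref{item-prenilpotent-but-not-classically-prenilpotent} I would exhibit a Weyl-group element (a translation, available since the system is affine) carrying both $\alpha,\beta$ to positive roots and another carrying both to negative roots, while non-classical-prenilpotence is immediate from $\alphabar\parallel\betabar$.

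Next, for the Chevalley relations, recall that by \eqref{eq-form-of-Chevalley-relations} the right-hand side is a product over $\gamma\in(\N\alpha+\N\beta)\cap\Phi$ with $\gamma\notin\{\alpha,\beta\}$. I would argue that in the generic case this index set is empty. Write $\alpha=(\alphabar,m)$, $\beta=(\alphabar,m')$ with $m\neq m'$ when $\alphabar=\betabar$ (the equal-projection case), or $\beta=(2\alphabar,m')$ in the doubled case. A root $p\alpha+q\beta$ with $p,q\geq1$ has projection $(p+q)\alphabar$ (equal case) or $(p+2q)\alphabar$ (doubled case), whose length is at least twice that of $\alphabar$, so it can only lie in $\Phi$ if $\Phibar$ is non-reduced, i.e.\ $\Phi=\BCtilde{}_n^\odd$ and $\alphabar$ is a short root of $BC_n$ — and even then only the combination with $p+q=2$ resp.\ $p=q=1$ can contribute, giving $\gamma=\alpha+\beta$. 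This isolates precisely the stated exceptional case. In all other cases the product is empty, so $[\U_\alpha,\U_\beta]=1$.

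I expect the main obstacle to be the careful bookkeeping in the $\BCtilde{}_n^\odd$ exceptional situation and, more subtly, verifying that prenilpotence genuinely holds in all the claimed cases — the translation elements of the affine Weyl group do the job, but one must confirm they simultaneously push a proportional pair strictly to one side, which requires checking that $\alpha,\beta$ lie in a common ``half'' determined by a generic translation direction; since $\alphabar,\betabar$ are parallel this is a one-dimensional statement and goes through, but it deserves a clean argument rather than hand-waving. A secondary nuisance is the fact that the right-hand side of \eqref{eq-form-of-Chevalley-relations} also, a priori, depends on whether $2\alpha$, $\alpha+\beta$, etc.\ are roots, so one must be sure no hidden $\gamma$ sneaks in; the length/projection argument above is what rules this out cleanly, and I would present it as the crux. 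The final clause of the lemma — that even in the exceptional case the relation is $[\U_\alpha,\U_\beta]=1$ \emph{unless} $\alpha+\beta\in\Phi$ — is then just the observation that the only possible surviving term $\gamma=\alpha+\beta$ is present exactly when $\alpha+\beta$ is actually a root, with the precise form of that relation deferred (as the lemma says) to the later special-case discussion.
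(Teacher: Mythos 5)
Your proposal is correct in substance and ends up at the same place as the paper, but it takes a somewhat different route to the equivalences. The paper's proof runs everything through one geometric reformulation: a pair is prenilpotent iff some chamber lies in both of the corresponding halfspaces and some other chamber lies in neither, and then the Euclidean dichotomy (bounding hyperplanes non-parallel, versus parallel with one halfspace containing the other) gives both directions of (i)$\Leftrightarrow$(ii) at once. You instead work in $\Lambda=\Lambdabar\oplus\Z$, close the cycle (i)$\Rightarrow$(ii)$\Rightarrow$(iii)$\Rightarrow$(i), and manufacture the elements $w_\pm$ by hand using translations of the affine Weyl group; that is a perfectly good concrete substitute, and your worry about pushing a proportional pair to one side is indeed harmless for exactly the one-dimensional reason you give. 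The only under-justified step is your exclusion of antiparallel projections in (i)$\Rightarrow$(ii): ``one of $\alpha+\beta$ or a small $\N$-combination would be zero or the pair would fail the prenilpotence sign condition'' is not yet an argument. The clean fix is one line, and is exactly what the paper's halfspace picture delivers: if $\betabar=-c\,\alphabar$ with $c>0$, the affine functional of $\beta$ differs from $-c$ times that of $\alpha$ by a constant, so depending on the sign of that constant either no chamber lies on the positive side of both hyperplanes or no chamber lies on the negative side of both, and the pair cannot be prenilpotent. Your treatment of the final clause --- $\Phibar\cap(\N\alphabar+\N\betabar)=\{\alphabar,\betabar\}$ except when $\Phibar=BC_n$ and $\alphabar=\betabar$ is short, so that the only possible extra factor in \eqref{eq-form-of-Chevalley-relations} is in $\U_{\alpha+\beta}$ and occurs only when $\alpha+\beta\in\Phi$ (note $2\alpha,2\beta\notin\Phi$ since long roots of $\BCtilde{}_n^\odd$ carry odd last coordinate) --- is essentially the paper's own argument, so nothing further is needed there.
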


\begin{proof}
We think of the Weyl group
$W$ acting on affine space in the usual way, with each
root corresponding to an open halfspace.  A root is positive if its
halfspace contains the fundamental chamber, or negative if not.
Recall that two roots $\alpha,\beta\in\Phi$ form a prenilpotent pair
if some element $w_+$ of $W$ sends both to
positive roots, and some  $w_-\in W$ sends both to negative
roots.  The existence of both $w_{\pm}$ is equivalent to: some chamber
lies in the halfspaces of both $\alpha$ and $\beta$, and some other
chamber lies in neither of them.  (Proof: apply $w_{\pm}$ to the
fundamental chamber rather than to $\{\alpha,\beta\}$.)  By Euclidean
geometry, this happens just if: either their bounding hyperplanes are
non-parallel, or else their bounding hyperplanes are parallel and
one halfspace contains the other.  In the first case $\alphabar$
and $\betabar$ are linearly independent, so $\alpha$, $\beta$ are
classically prenilpotent.  In the second case, $\alphabar$ and
$\betabar$ differ by a positive scalar.  If $\alpha$ and $\beta$ are
equal then they form a classically prenilpotent pair. Otherwise
they do not, because $(\Q\alpha\oplus\Q\beta)\cap\Phi$ is infinite. 
This proves the equivalence
of \ref{item-prenilpotent-but-not-classically-prenilpotent} and \ref{item-projections-differ-by-positive factor}.

To see the
equivalence of \ref{item-projections-differ-by-positive factor} and
\ref{item-projections-are-equal-except-for-special-case} we refer to
the fact that $\Phibar$ is a reduced root system (i.e, the only
positive multiple of a root that can be a root is that root itself)
except in the case $\Phi=\BCtilde{}_n^\odd$.  In this last case, the
only way one root of $\Phibar=BC_n$ can be a positive multiple of a
different root is that the long roots are got by doubling the short roots.

The proof of the final claim is similar.  Except in the excluded case,
we have $\Phibar\cap(\N\alphabar+\N\betabar)=\{\alphabar,\betabar\}$.  The
corresponding claim for $\Phi$ follows, so
$\theta(\alpha,\beta)-\{\alpha,\beta\}$ is empty and the right hand side of
\eqref{eq-explicit-form-of-Chevalley-relations} is the identity.  That is, the Chevalley relations for $\alpha,\beta$
read $[\U_\alpha,\U_\beta]=1$.
(In the excluded case we remark that
$\Phi\cap(\N\alpha+\N\beta)=\{\alpha,\beta,\alpha+\beta\}$.  So the Chevalley relations
set the commutators of elements of $\U_\alpha$ with elements of
$\U_\beta$ equal to  certain elements of $\U_{\alpha+\beta}$. See
case~\XX{6} below.)
\end{proof}

Recall from theorem~\ref{thm-two-models-of-pre-Steinberg} that
$\St_A(R)$ may be got from $\PSt_A(R)$ by adjoining the Chevalley
relations for every prenilpotent pair $\alpha,\beta$ that is not
classically prenilpotent.  So to prove theorem~\ref{thm-presentations}
it suffices to show that that these relations already hold in
$\PSt:=\PSt_A(R)$.  
In light of lemma~\ref{lem-prenilpotent-but-not-classically-prenilpotent-pairs},
the proof falls into seven cases, according to
$\Phi$ and the relative position of $\alphabar$ and $\betabar$.
Conceptually, they are organized as follows; see below for their exact
hypotheses.  Case~\XX{1} applies if $\alphabar=\betabar$ is a long
root of some $A_2$ root system in $\Phibar$.  Case~\XX{2}
(\hbox{resp.\ \XX{3}}) applies if $\alphabar=\betabar$ is a long
(resp.\ short) root of some $B_2$ root system in $\Phibar$.
Case~\XX{4} applies if $\alphabar=\betabar$ is a short root of
$\Phibar=G_2$.  The rest of the cases are specific to
$\Phi=\BCtilde{}_n^\odd$.  Case~\XX{5} applies if
$\betabar=2\alphabar$.  Case \XX{6} or~\XX{7} applies if
$\alphabar=\betabar$ is a short root of $BC_n$.  There are two cases
because $\alpha+\beta$ might or might not be a root.

In every
case but one we must establish $[\U_\alpha,\U_\beta]=1$.  Each case
begins by choosing two roots in $\Phi$, of which $\beta$ is a
specified linear combination, and whose projections to $\Phibar$ are
specified.  Given the global description of $\Phi$ from
section~\ref{sec-new-nomenclature}, this is always easy.  Then we use the Chevalley
relations for various classically prenilpotent pairs to deduce the
Chevalley relations for $\alpha,\beta$. 

\begin{proof}[Case~\XX{1} of theorem~\ref{thm-presentations}]
{\it Assume $\alphabar=\betabar$ is a root of $\Phibar=A_{n\geq2}$,
  $D_n$ or $E_n$, or a long root of $\Phibar=G_2$.}  Choose
$\gammabar,\deltabar\in\Phibar$ as shown, and choose lifts
$\gamma,\delta\in\Phi$ summing to $\beta$.  (Choose any
$\gamma\in\Phi$ lying over $\gammabar$, define $\delta=\beta-\gamma$,
and use the global description of $\Phi$ to check that $\delta\in\Phi$.
This is trivial except in the case $\Phi=\Gtilde_2^\zeromodthree$,
when it is easy.)
$$
\begin{tikzpicture}[>=stealth, line width=1pt, scale=1.2]
\draw[->] (0,0) -- (-.866,.5) node {$\gammabar$\,\,\,};
\draw[->] (0,0) -- (0,1) node {\raise15pt\hbox{$\alphabar,\betabar$}};
\draw[->] (0,0) -- (.866,.5) node {\,\,\,$\deltabar$};
\end{tikzpicture}
$$
Because $\alphabar+\gammabar,\alphabar+\deltabar\notin\Phibar$, it
follows that $\alpha+\gamma,\alpha+\delta\notin\Phi$.  So
  the Chevalley relations
  $[\U_\alpha,\U_\gamma]=[\U_\alpha,\U_\delta]=1$ hold.  The Chevalley
  relations for $\gamma,\delta$ imply
  $[\U_\gamma,\U_\delta]=\U_{\gamma+\delta}=\U_\beta$.
(These relations are 
\eqref{eq-6-Chevalley-distant-long}
in the $G_2$ case and 
\eqref{eq-3-Chevalley-distant-roots}
in the others.  One can write
them as $[X_\gamma(t),X_{\delta}(u)]=X_{\gamma+\delta}(t u)$ in the
notation of the next paragraph.)
Since $\U_\alpha$ commutes with $\U_\gamma$ and $\U_\delta$, it 
commutes with the group they generate, hence $\U_\beta$.
\end{proof}

The other cases use the same strategy: express an element of
$\U_\beta$ in terms of other root groups, and then evaluate its
commutator with an element of $\U_\alpha$.  But the calculations are
more delicate.  We will work with explicit elements
$X_\gamma(t)\in\U_\gamma$ for various roots $\gamma\in\Phi$.  Here~$t$
varies over $R$, and the definition of $X_\gamma(t)$ depends on
choosing a basis vector $e_\gamma$ for the corresponding root space
$\g_\gamma\sset\g$ as explained in section~\ref{sec-background}. 
For each $\gamma$ there are two
possibilities for $e_\gamma$.  The point of making
these sign choices is to write down the relations explicitly.

For
example, if $s,l\in I$ are the short and long roots of a $B_2$
subdiagram of $A$, then 
we copy their relations from \eqref{eq-4-Chevalley-distant-short-and-long}:
\begin{equation}
\label{eq-quoted-Chevalley-Relation-s-and-l}
[X_s(t),X_l(u)]
=
S_l X_s(-t u) S_l^{-1}
\cdot
S_s X_l(t^{2} u) S_s^{-1}
\end{equation}
for all $t,u\in R$.  
The reason for writing the right side this way is
to avoid making choices: to write down the relation, one only needs to
specify generators $e_s$ and $e_l$ for $\g_s$ and
$\g_l$, not the other root spaces involved.  But for explicit
computation one must choose generators for these other root spaces.
Because $S_s$ and $S_l$ permute the root spaces in the same way  the
reflections in $s$ and $l$ do, the terms on
the right of
\eqref{eq-quoted-Chevalley-Relation-s-and-l}
lie in $\U_{l+s}$ and $\U_{l+2s}$.
Therefore, after choosing suitable generators $e_{l+s}$
and $e_{l+2s}$ for $\g_{l+s}$ and
$\g_{l+2s}$, we may
rewrite~\eqref{eq-quoted-Chevalley-Relation-s-and-l}
as
\begin{equation}
\label{eq-Chevalley-135-degrees-s-and-l}
[X_s(t),X_l(u)]
=
X_{l+s}(-t u)
\cdot 
X_{l+2s}(t^2u)
\end{equation}
Now, if $\sigma$ and $\lambda$ are short and long simple roots for
{\it any} copy of $B_2$ in $\Phi$, then some element $w$ of the Weyl
group sends some pair of simple roots to them.  Taking $s$ and $l$ to
be this pair, and  defining $X_\sigma$, $X_\lambda$,
$X_{\lambda+\sigma}$ and $X_{\lambda+2\sigma}$ as the $w$-conjugates
of $X_s$, $X_l$, $X_{l+s}$ and $X_{l+2s}$, we can write  the
Chevalley relation for $\sigma$ and $\lambda$ by 
applying the substitution $s\mapsto\sigma$ and $l\mapsto\lambda$ to \eqref{eq-Chevalley-135-degrees-s-and-l}:
\begin{equation}
\label{eq-Chevalley-135-degrees-sigma-and-lambda}
[X_\sigma(t),X_\lambda(u)]
=
X_{\lambda+\sigma}(-t u)
\cdot 
X_{\lambda+2\sigma}(t^2u)
\end{equation}
In this way we can obtain the Chevalley relations we will need, for
any classically prenilpotent pair, from the ones listed explicitly in
section~\ref{sec-the-presentation}.  One could also refer to any other
standard reference, for example \cite[\S5.2]{Carter}.

The root system $\BCtilde{}_{n\geq2}^\odd$ appears as a possibility in 
several cases, including the next one.  We
will use ``short'', ``middling'' and ``long'' to refer
to its three different root lengths.

\begin{proof}[Case~\XX{2} of theorem~\ref{thm-presentations}]
{\it Assume $\alphabar=\betabar$ is a long root of
  $\Phibar=B_{n\geq2}$, $C_{n\geq2}$, $BC_{n\geq2}$ or $F_4$.}  
Our first step is to 
choose roots $\lambdabar,\sigmabar\in\Phibar$ as pictured:
$$
\begin{tikzpicture}[>=stealth, line width=1pt, scale=1.2]
\draw[->] (0,0) -- (-1,1) node {\llap{$\lambdabar$}\,};
\draw[->] (0,0) -- (0,1);
\draw[->] (0,0) -- (1,1) node {\,\rlap{$\alphabar,\betabar$}};
\draw[->] (0,0) -- (1,0) node {\,\rlap{$\sigmabar$}};
\end{tikzpicture}
$$ This is easily done using any standard description of $\Phibar$.
(Note: although $\lambdabar$ stands for ``long'' and $\sigmabar$ for ``short'', 
$\sigmabar$ is actually a middling root in the case $\Phibar=BC_n$.)  

Our
second step is to choose lifts $\lambda,\sigma\in\Phi$ of them with
$\beta=\lambda+2\sigma$.  If $\Phi=\Btilde{}_n$, $\Ctilde{}_n$ or
$\Ftilde{}_4$ then one chooses any lift $\sigma$ of $\sigmabar$ and
defines $\lambda$ as $\beta-2\sigma$.  This works since every element
of $\Lambda$ lying over a root of $\Phibar$ is a root of $\Phi$.  If
$\Phi=\Btilde{}_n^\even$, $\Ctilde{}_n^\even$, $\Ftilde{}_4^\even$ or
$\BCtilde{}_n^\odd$ then this argument might fail since $\Phi$ is
``missing'' some long roots.  Instead, one chooses any
$\lambda\in\Phi$ lying over $\lambdabar$ and defines $\sigma$ as
$(\beta-\lambda)/2$.  Now, $\beta-\lambda=(\betabar-\lambdabar,m)$
with $m$ being even by the meaning of the superscript
${}^\even$ or ${}^\odd$.  Also, $\betabar-\lambdabar$ is divisible
by~$2$ in $\Lambdabar$ by the figure above.  It follows that
$\sigma\in\Lambda$.  Then, as an element
of $\Lambda$ lying over a short (or middling) root of $\Phibar$, $\sigma$ lies
in~$\Phi$.

Because $\sigma,\lambda$ are simple roots for a $B_2$ root system
inside $\Phi$, their Chevalley relation \eqref{eq-Chevalley-135-degrees-sigma-and-lambda}
holds in $\PSt$.  This shows that 
any element of
$\U_\beta=\U_{\lambda+2\sigma}$ can be written in the form
\begin{equation}
\label{eq-getting-Chevalley-relation-for-2-long-roots-for-affine-B2}
(\hbox{some }x_{\lambda+\sigma}\in\U_{\lambda+\sigma})
\cdot
\bigl[
(\hbox{some }x_\sigma\in\U_\sigma),
(\hbox{some }x_\lambda\in\U_\lambda)
\bigr]
.
\end{equation}
Referring to the picture of $\Phibar$ shows that
$\alpha+\lambda+\sigma\notin\Phi$.  Therefore the Chevalley relations in
$\PSt$ include $[\U_\alpha,\U_{\lambda+\sigma}]=1$.  In particular,
$\U_\alpha$ commutes with the first term of
\eqref{eq-getting-Chevalley-relation-for-2-long-roots-for-affine-B2}.
The same argument shows that $\U_\alpha$ also commutes with the other
terms, hence with any element of $\U_\beta$. 
This shows that the Chevalley relations present in $\PSt$ imply
$[\U_\alpha,\U_\beta]=1$, as desired.
\end{proof}

\begin{proof}[Case~\XX{3} of theorem~\ref{thm-presentations}]
{\it Assume $\alphabar=\betabar$ is a short root of $\Phibar=B_{n\geq2}$,
  $C_{n\geq2}$ or $F_4$, or a middling root of $\Phibar=BC_{n\geq2}$.}
We may choose $\lambda,\sigma\in\Phi$ with sum $\beta$ and the following
projections to $\Phibar$ (by a simpler argument than in the previous case):
$$
\begin{tikzpicture}[>=stealth, line width=1pt, scale=1.2]
\draw[->] (0,0) -- (-1,1) node {\llap{$\lambdabar$}\,};
\draw[->] (0,0) -- (0,1) node {\raise15pt\hbox{$\alphabar,\betabar$}};
\draw[->] (0,0) -- (1,1);
\draw[->] (0,0) -- (1,0) node {\,\rlap{$\sigmabar$}};
\end{tikzpicture}
$$ The Chevalley relations for $\sigma,\lambda$ are
\eqref{eq-Chevalley-135-degrees-sigma-and-lambda}, showing that any element of
$\U_\beta=\U_{\sigma+\lambda}$ can be written in the form
\begin{equation}
\label{eq-getting-Chevalley-relation-for-2-short-roots-for-affine-B2}
\bigl[
(\hbox{some }x_\sigma\in\U_\sigma),
(\hbox{some }x_\lambda\in\U_\lambda)
\bigr]
\cdot
(\hbox{some }x_{\lambda+2\sigma}\in\U_{\lambda+2\sigma})
.
\end{equation}
As in the previous case, we will conjugate this by an arbitrary
element of $\U_\alpha$.  This requires the following Chevalley relations.
We have
$[\U_\alpha,\U_\lambda]=1$ 
and 
$[\U_\alpha,\U_{\lambda+2\sigma}]=1$ 
by
the same argument as before.  What is new is that the Chevalley relations for
$\alpha,\sigma$ depend on whether $\alpha+\sigma$ is a root.  If it is,
then we get $[\U_\alpha,\U_\sigma]\sset\U_{\alpha+\sigma}$, and if not then we
get $[\U_\alpha,\U_\sigma]=1$.  In the second case we see that $\U_\alpha$
commutes with~\eqref{eq-getting-Chevalley-relation-for-2-short-roots-for-affine-B2},
proving $[\U_\alpha,\U_\beta]=1$ and therefore finishing the proof.

In the first case, conjugating \eqref{eq-getting-Chevalley-relation-for-2-short-roots-for-affine-B2} by a element of $\U_\alpha$ yields
$$
\bigl[
x_\sigma\cdot(\hbox{some }x_{\alpha+\sigma}\in\U_{\alpha+\sigma}),
x_\lambda
\bigr]
\cdot
x_{\lambda+2\sigma}
$$ which we can simplify by further use of Chevalley relations.
Namely, neither $\lambda+\alpha+\sigma$ nor $\alpha+2\sigma$ is a
root, so $\U_{\alpha+\sigma}$ centralizes $\U_\lambda$ and
$\U_\sigma$.  So $x_{\alpha+\sigma}$ centralizes the other terms in
the commutator, hence drops out, leaving
\eqref{eq-getting-Chevalley-relation-for-2-short-roots-for-affine-B2}.
This shows that conjugation by any element of $\U_\alpha$ leaves
invariant every element of $\U_\beta$.  That is,  $[\U_\alpha,\U_\beta]=1$.
\end{proof}

\begin{proof}[Case~\XX{4} of theorem~\ref{thm-presentations}]
{\it Assume $\alphabar=\betabar$ is a short root of $\Phibar=G_2$.}
This is the hardest case by far.  Begin by choosing roots
$\sigmabar,\lambdabar\in\Phibar$ as shown, with lifts
$\sigma,\lambda\in\Phi$ summing to $\beta$.
$$
\begin{tikzpicture}[>=stealth, line width=1pt, scale=1.2]
\draw[->] (0,0) -- (-1,.577) node {\llap{$\sigmabar$}\,};
\draw[->] (0,0) -- (-1,1.732);
\draw[->] (0,0) -- (0,1.155);
\draw[->] (0,0) -- (1,1.732);
\draw[->] (0,0) -- (1,.554) node {\,\rlap{$\alphabar,\betabar$}};
\draw[->] (0,0) -- (2,0) node {\,\rlap{$\lambdabar$}};
\end{tikzpicture}
$$
Many different root groups appear in the argument, so we choose a
generator $e_\gamma$ of $\gamma$'s root space, for each  $\gamma\in\Phi$ which is a
nonnegative linear combination of $\alpha,\sigma,\lambda$.

Next we write down the $G_2$ Chevalley relations in $\PSt$ that we
will need, derived from
\eqref{eq-6-Chevalley-close-long}--\eqref{eq-6-Chevalley-distant-short-and-long}.
We will write them down in the $\Phi=\Gtilde{}_2$ case and then
comment on the simplifications that occur if
$\Phi=\Gtilde{}_2^\zeromodthree$.  After negating some of the
$e_\gamma$, for $\gamma$ involving $\sigma$ and $\lambda$ but not
$\alpha$, we may suppose that the Chevalley relations
\eqref{eq-6-Chevalley-distant-short-and-long} for $\sigma,\lambda$
read
\begin{equation}
\label{eq-Chevalley-relation-for-s-and-l-for-G2-short-root-case}
\begin{split}
[X_\sigma(t),&X_\lambda(u)]
=
\\
&X_{2\sigma+\lambda}(t^2u)
X_{\sigma+\lambda}(-t u)
X_{3\sigma+\lambda}(t^3u)
X_{3\sigma+2\lambda}(-t^3u^2).
\end{split}
\end{equation}
Then we may negate $e_{\alpha+2\sigma+\lambda}$ if necessary, to suppose the Chevalley
relations \eqref{eq-6-Chevalley-close-short}
for $\alpha,2\sigma+\lambda$ read
\begin{equation}
\label{eq-Chevalley-relations-for-a-and-2s+l-for-G2-short-root-case}
[X_\alpha(t),X_{2\sigma+\lambda}(u)]
=
X_{\alpha+2\sigma+\lambda}(3t u).
\end{equation}
After negating some of the $e_\gamma$ for $\gamma$ involving $\alpha$ and $\sigma$ but not
$\lambda$, we may suppose that the Chevalley relations 
\eqref{eq-6-Chevalley-distant-short} for $\sigma$ and
$\alpha$ read
\begin{equation}
\label{eq-Chevalley-relations-for-a-and-s-for-G2-short-root-case}
[X_\sigma(t),X_\alpha(u)]
=
X_{\alpha+\sigma}(-2t u)
X_{\alpha+2\sigma}(-3t^2 u)
X_{2\alpha+\sigma}(-3t u^2)
\end{equation}
We know the Chevalley relations
\eqref{eq-6-Chevalley-close-short}
for $\sigma$ and $\alpha+\sigma$ have the
form
\begin{equation}
\label{eq-Chevalley-relations-for-s-and-a+s-for-G2-short-root-case}
[X_\sigma(t),X_{\alpha+\sigma}(u)]
=
X_{\alpha+2\sigma}(3\e t u)
\end{equation}
where $\e=\pm1$.  We cannot choose the sign because we've already used
our freedom to negate $e_{\alpha+2\sigma}$ in order to get
\eqref{eq-Chevalley-relations-for-a-and-s-for-G2-short-root-case}.  
Similarly, we know that the Chevalley relations 
\eqref{eq-6-Chevalley-distant-long}
for $\lambda$ and $\alpha+2\sigma$ are
\begin{equation}
\label{eq-Chevalley-relations-for-l-and-a+2s-for-G2-short-roots-case}
[X_\lambda(t),X_{\alpha+2\sigma}(u)]
=
X_{\alpha+2\sigma+\lambda}(\e't u)
\end{equation}
for some $\e'=\pm1$.  (We will see at the very end that $\e=\e'=1$.)

We were able to write down these relations because we could work out
the roots in the positive span of any two given roots.  This used the
assumption $\Phi=\Gtilde{}_2$, but now suppose
$\Phi=\Gtilde{}_2^\zeromodthree$.  It may happen that some of the
vectors appearing in the previous paragraph, projecting to long roots
of~$\Phibar=G_2$, are not roots of~$\Phi$.  One can check that if
$\alpha-\beta$ is divisible by~$3$ in $\Lambda$ then there is no
change.  On the other hand, if $\alpha-\beta\not\cong0$ mod~$3$ then
$\alpha+2\sigma+\lambda$, $\alpha+2\sigma$ and $2\alpha+\sigma$ are
not roots.  Because
$\bigl(\Q\alpha\oplus\Q(2\sigma+\lambda)\bigr)\cap\Phi$ now has type
$A_2$ rather than $G_2$,
\eqref{eq-Chevalley-relations-for-a-and-2s+l-for-G2-short-root-case}
is replaced by $[\U_\alpha,\U_{2\sigma+\lambda}]=1$, from
\eqref{eq-3-Chevalley-close-roots}.  And
$(\Q\alpha\oplus\Q\sigma)\cap\Phi$ also has type~$A_2$ now, so
\eqref{eq-Chevalley-relations-for-a-and-s-for-G2-short-root-case} is
replaced by $[X_\sigma(t),X_\alpha(t)]=X_{\alpha+\sigma}(t u)$,
obtained from \eqref{eq-3-Chevalley-distant-roots}, and
\eqref{eq-Chevalley-relations-for-s-and-a+s-for-G2-short-root-case} is
replaced by $[\U_\sigma,\U_{\alpha+\sigma}]=1$, from
\eqref{eq-3-Chevalley-close-roots}.  Finally, there is no relation
\eqref{eq-Chevalley-relations-for-l-and-a+2s-for-G2-short-roots-case}
because there is no longer a root group $\U_{\alpha+2\sigma}$.  The
calculations below use the relations
\eqref{eq-Chevalley-relation-for-s-and-l-for-G2-short-root-case}--\eqref{eq-Chevalley-relations-for-l-and-a+2s-for-G2-short-roots-case}.
To complete the proof, one must also carry out a similar calculation
using \eqref{eq-Chevalley-relation-for-s-and-l-for-G2-short-root-case}
and the altered versions of
\eqref{eq-Chevalley-relations-for-a-and-2s+l-for-G2-short-root-case}--\eqref{eq-Chevalley-relations-for-s-and-a+s-for-G2-short-root-case}.
This calculation is so much easier that we omit it.

The long roots 
$3\sigma+2\lambda$, 
$\alpha+2\sigma+\lambda$ and $2\alpha+\sigma$ all  lie over
$3\sigmabar+2\lambdabar$.  These root groups commute with all others
that will appear, by the Chevalley relations in $\PSt$, and they
commute with each other by case~\XX{1} above.  We will use this
without specific mention.

Since $\beta=\sigma+\lambda$, we may take \eqref{eq-Chevalley-relation-for-s-and-l-for-G2-short-root-case}  with $t=1$ and
rearrange, to express any element of $\U_\beta$ as
\begin{equation}
\label{eq-G2-expression-for-element-of-U-beta}
X_\beta(u)
=
X_{3\sigma+\lambda}(u)
X_{3\sigma+2\lambda}(-u^2)
[X_\lambda(u),X_\sigma(1)]
X_{2\sigma+\lambda}(u).
\end{equation}
We use this to express the commutators generating
$[\U_\alpha,\U_\beta]$:
\begin{equation}
\label{eq-G2-big-mess-for-commutator-1}
\begin{split}
[X_\alpha(t)&,X_\beta(u)]
=
\\
&{}\cdot
X_\alpha(t)X_{3\sigma+\lambda}(u)X_{\alpha}(t)^{-1}
\cdot
X_\alpha(t)X_{3\sigma+2\lambda}(-u^2)X_{\alpha}(t)^{-1}
\\
&{}\cdot
[X_\alpha(t)X_\lambda(u)X_{\alpha}(t)^{-1},
X_\alpha(t)X_\sigma(1)X_{\alpha}(t)^{-1}]
\\
&{}\cdot
X_\alpha(t)X_{2\sigma+\lambda}(u)X_{\alpha}(t)^{-1}
\\
&{}\cdot
X_{2\sigma+\lambda}(-u)
[X_\sigma(1),X_\lambda(u)]
X_{3\sigma+2\lambda}(u^2)
X_{3\sigma+\lambda}(-u).
\end{split}
\end{equation}
Because $\U_\alpha$ centralizes $\U_{3\sigma+\lambda}$,
$\U_{3\sigma+2\lambda}$ and $\U_\lambda$, we may cancel the
$X_\alpha(t)$'s in the first two terms, and in the first term of the
first commutator.  Becase $\U_{3\sigma+2\lambda}$ centralizes all
terms present, we may cancel the terms $X_{3\sigma+2\lambda}(\pm
u^2)$.  The terms between the commutators assemble themselves into
$[X_\alpha(t),X_{2\sigma+\lambda}(u)]$, which equals
$X_{\alpha+2\sigma+\lambda}(3tu)$ by \eqref{eq-Chevalley-relations-for-a-and-2s+l-for-G2-short-root-case}.  Because
$\U_{\alpha+2\sigma+\lambda}$ centralizes all terms present, we may
move this term to the very beginning.  Finally, from \eqref{eq-Chevalley-relations-for-a-and-s-for-G2-short-root-case} one can rewrite
the second terms of the first commutator as
$$
X_\alpha(t)X_\sigma(1)X_\alpha(t)^{-1}
=
X_{2\alpha+\sigma}(3t^2)
X_{\alpha+2\sigma}(3t)
X_{\alpha+\sigma}(2t)
X_\sigma(1).
$$
After all these simplifications, \eqref{eq-G2-big-mess-for-commutator-1} reduces to 
\begin{equation}
\label{eq-G2-big-mess-for-commutator-2}
\begin{split}
[X_\alpha(t)&,X_\beta(u)]
=
X_{\alpha+2\sigma+\lambda}(3tu)
X_{3\sigma+\lambda}(u)
\\
&{}\cdot
[X_\lambda(u),
X_{2\alpha+\sigma}(3t^2)
X_{\alpha+2\sigma}(3t)
X_{\alpha+\sigma}(2t)
X_\sigma(1)]
\\
&{}\cdot
[X_\sigma(1),X_\lambda(u)]
X_{3\sigma+\lambda}(-u).
\end{split}
\end{equation}

Now we focus on the first commutator $[\cdots,\cdots]$.  All its terms
commute with $\U_{2\alpha+\sigma}$, so we may drop the
$X_{2\alpha+\sigma}(3t^2)$ term.  Writing out what remains gives
\begin{equation*}
\begin{split}
[\cdots,\cdots]
={}&
X_\lambda(u)
X_{\alpha+2\sigma}(3t)
X_{\alpha+\sigma}(2t)
X_\sigma(1)
\\
&{}\cdot
X_\lambda(-u)
X_\sigma(-1)
X_{\alpha+\sigma}(-2t)
X_{\alpha+2\sigma}(-3t).
\end{split}
\end{equation*}
By repeatedly using \eqref{eq-Chevalley-relations-for-s-and-a+s-for-G2-short-root-case}--\eqref{eq-Chevalley-relations-for-l-and-a+2s-for-G2-short-roots-case} and the commutativity of
various pairs of root groups, we move all the $X_\lambda$ and
$X_\sigma$ terms to the far right.  A page-long computation yields
$$
[\cdots,\cdots]
=
X_{\alpha+2\sigma+\lambda}(3\e' t u  -6\e\e' t u)
[X_\lambda(u),X_\sigma(1)].
$$

Plugging this into \eqref{eq-G2-big-mess-for-commutator-2}, and
cancelling the commutators and the $X_{3\sigma+\lambda}(\pm u)$ terms, yields
\begin{align*}
[X_\alpha(t),X_\beta(u)]
&{}=
X_{\alpha+2\sigma+\lambda}(3t u+3\e' t u-6\e\e' t u)
\\
&{}=
X_{\alpha+2\sigma+\lambda}(Ctu)
\end{align*}
where $C=0$, $\pm6$ or $12$ depending on $\e,\e'\in\{\pm1\}$.

If $C=0$ (i.e., $\e=\e'=1$) then we have established the
desired Chevalley relation $[\U_\alpha,\U_\beta]=1$ and the proof is complete.
Otherwise we pass to the quotient $\St$ of $\PSt$.  Here $\U_\alpha$ and
$\U_\beta$ commute, so we derive the relation
$X_{\alpha+2\sigma+\lambda}(C t)=1$ in
$\St$.  Since this identity holds universally, it holds for $R=\C$, so
the image of $\U_{\alpha+2\sigma+\lambda}(\C)$ in $\St(\C)$ is the trivial group.
This is a contradiction, since $\St(\C)$ acts on the Kac-Moody algebra
$\g$, with
$X_{\alpha+2\sigma+\lambda}(t)$ acting (nontrivially for $t\neq0$) by $\exp\ad(t e_{\alpha+2\sigma+\lambda})$.
Since $C\neq0$ leads to a contradiction, we must have $C=0$ and so the
Chevalley relation $[\U_\alpha,\U_\beta]=1$ holds in $\PSt$.
%
\end{proof}

\begin{proof}[Case~\XX{5} of theorem~\ref{thm-presentations}]
{\it Assume $\betabar=2\alphabar$ in $\Phibar=BC_{n\geq2}$.}
Choose $\mubar,\lambdabar\in\Phibar$ as shown, and lift them to $\mu,\lambda\in\Phi$
with $2\mu+\lambda=\beta$. (Mnemonic: $\mu$ is middling and $\lambda$
is long.)
$$ 
\begin{tikzpicture}[>=stealth, line width=1pt, scale=1]
\draw[->] (0,0) -- (-1,1) node {\llap{$\lambdabar$}\,};
\draw[->] (0,0) -- (-.5,.5);
\draw[->] (0,0) -- (0,1);
\draw[->] (0,0) -- (.5,.5) node {\lower11pt\rlap{\,$\alphabar$}};
\draw[->] (0,0) -- (1,1) node {\rlap{$\betabar$}};
\draw[->] (0,0) -- (1,0) node {\,\rlap{$\mubar$}};
\end{tikzpicture}
$$
As in the case~\XX{2} (when $\alphabar$ and $\betabar$ were the same long root of $\Phibar=B_n$), we can
express any element of $\U_\beta$ in the form
$$
(\hbox{some }x_{\mu+\lambda}\in\U_{\mu+\lambda})
\cdot
\bigl[
(\hbox{some }x_\lambda\in\U_\lambda)
,
(\hbox{some }x_\mu\in\U_\mu)
\bigr]
.
$$
The Chevalley relations in $\PSt$ include the commutativity of
$\U_{\mu+\lambda}$
with $\U_\lambda$, $\U_\mu$ and 
$\U_\alpha$.  So $\U_\alpha$ also centralizes
$\U_\beta$. 
\end{proof}

\begin{proof}[Case~\XX{6} of theorem~\ref{thm-presentations}.]
{\it Assume $\alphabar=\betabar$ is a short root of $\Phibar=BC_{n\geq2}$ and
  $\alpha+\beta$ is a root.}
This is the exceptional case of lemma~\ref{lem-prenilpotent-but-not-classically-prenilpotent-pairs}, and the Chevalley
relation we must establish is not $[\U_\alpha,\U_\beta]=1$.  We will
determine the correct relation during the proof.
We begin by
choosing $\mubar,\sigmabar\in\Phibar$ as shown and lifting them to $\mu,\sigma\in\Phi$
with $\mu+\sigma=\beta$, so $\sigma,\mu$ generate a $B_2$ root
system. 
$$ 
\begin{tikzpicture}[>=stealth, line width=1pt, scale=1]
\draw[->] (0,0) -- (-1,1);
\draw[->] (0,0) -- (-.5,.5) node {\lower11pt\llap{$\sigmabar$\,}};
\draw[->] (0,0) -- (0,1);
\draw[->] (0,0) -- (.5,.5) node {\lower12pt\rlap{\,$\alphabar,\betabar$}};
\draw[->] (0,0) -- (1,1);
\draw[->] (0,0) -- (1,0) node {\lower10pt\rlap{\,$\mubar$}};
\end{tikzpicture}
$$ We choose a generator $e_\gamma$ for the root space of each nonnegative
linear combination $\gamma\in\Phi$ of $\alpha,\sigma,\mu$.  By changing
the signs of $e_{\sigma+\mu}$ and $e_{2\sigma+\mu}$ if necessary, we may suppose that the
Chevalley relations 
\eqref{eq-4-Chevalley-distant-short-and-long}
for
$\sigma$, $\mu$ are
\begin{equation}
\label{eq-Chevalley-relation-for-s-and-m-BCn-case}
[
X_\sigma(t)
,
X_\mu(u)
]
=
X_{\sigma+\mu}(-t u)
X_{2\sigma+\mu}(t^2u),
\end{equation}
Since $\sigma+\mu=\beta$ we may take $t=1$ in
\eqref{eq-Chevalley-relation-for-s-and-m-BCn-case} to express any
element of $\U_\beta$:
\begin{equation}
\label{eq-expression-for-X-b-of-u-BCn-case}
X_\beta(u)
=
X_{2\sigma+\mu}(u)
[
X_\mu(u)
,
X_\sigma(1)
].
\end{equation}
Using this one can express any generator for $[\U_\alpha,\U_\beta]$:
\begin{equation}
\label{eq-expression-for-any-generator-in-B-C-n-coincident-long-root-case}
\begin{split}
[
X_\alpha(t)
,
X_\beta(u)
]
={}&
X_\alpha(t)
X_{2\sigma+\mu}(u)
X_\alpha(t)^{-1}
\\
&{}\cdot
\bigl[
X_\alpha(t)
X_\mu(u)
X_\alpha(t)^{-1}
,
X_\alpha(t)
X_\sigma(1)
X_\alpha(t)^{-1}
\bigr]
\\
&{}\cdot
[
X_\sigma(1)
,
X_\mu(u)
]
\cdot
X_{2\sigma+\mu}(-u).
\end{split}
\end{equation}
By the Chevalley relations $[\U_\alpha,\U_{2\sigma+\mu}]=[\U_\alpha,\U_\mu]=1$, the
$X_\alpha(t)^{\pm1}$'s cancel in the first term and in the first term of
the first commutator.  

Now we consider the Chevalley relations of $\alpha$ and $\sigma$.  Since
$\alphabar+\sigmabar$ is a middling root of $\Phibar$, and $\Phi$ contains every
element of $\Lambda$ lying over every such root, we see that
$\alpha+\sigma$ is a root of $\Phi$.  In particular,
$(\Q\alpha\oplus\Q\sigma)\cap\Phi$ is a $B_2$ root system, in which $\alpha$
and $\sigma$ are orthogonal short roots.  
The Chevalley relations 
\eqref{eq-4-Chevalley-orthogonal-short}
for
$\alpha,\sigma$ are therefore
\begin{equation}
\label{eq-Chevalley-relation-for-a-and-s-BCn-case}
[
X_\alpha(t)
,
X_\sigma(u)
]
=
X_{\alpha+\sigma}(-2t u),
\end{equation}
after changing the sign of $e_{\alpha+\sigma}$ if necessary.

Next, $\mu+\sigma+\alpha=\alpha+\beta$ is a root by hypothesis.  We choose $e_{\mu+\sigma+\alpha}$ so that the
Chevalley relations
\eqref{eq-4-Chevalley-orthogonal-short}
for $\mu$, $\alpha+\sigma$ are
\begin{equation}
\label{eq-Chevalley-relation-for-m-and-a+s-BCn-case}
[
X_\mu(t)
,
X_{\alpha+\sigma}(u)
]
=
X_{\mu+\alpha+\sigma}(-2t u).
\end{equation}

Now we rewrite
\eqref{eq-expression-for-any-generator-in-B-C-n-coincident-long-root-case},
applying the cancellations mentioned above and  rewriting the second term in the
first commutator using \eqref{eq-Chevalley-relation-for-a-and-s-BCn-case}:
\begin{equation}
\label{eq-intermediate-form-of-X-a-bracket-X-b-BCn-case}
\begin{split}
[
X_\alpha(t)
,
X_\beta(u)
]
={}&
X_{2\sigma+\mu}(u)
\cdot
\bigl[
X_\mu(u)
,
X_{\alpha+\sigma}(-2t)
X_\sigma(1)
\bigr]
\\
&{}\cdot
[
X_\sigma(1)
,
X_\mu(u)
]
\cdot
X_{2\sigma+\mu}(-u).
\end{split}
\end{equation}
Now we restrict attention to the first commutator on the right side
and use the Chevalley relations $[\U_{\alpha+\sigma},\U_\sigma]=1$ and
\eqref{eq-Chevalley-relation-for-m-and-a+s-BCn-case} to obtain
\begin{align*}
\bigl[
X_\mu(u)
,
X_{\alpha+\sigma}(-2t)
X_\sigma(1)
\bigr]
={}&
X_\mu(u)
X_{\alpha+\sigma}(-2t)
\cdot
X_\sigma(1)
\\
&{}\cdot
X_\mu(-u)
X_\sigma(-1)
X_{\alpha+\sigma}(2t)
\\
{}={}&
X_{\mu+\alpha+\sigma}(4t u)
X_{\alpha+\sigma}(-2t)
X_\mu(u)
\cdot
X_\sigma(1)
\\
&{}\cdot
X_{\mu+\alpha+\sigma}(4t u)
X_{\alpha+\sigma}(2t)
X_\mu(-u)
X_\sigma(-1).
\end{align*}
The projections to $\Phibar$ of any two roots occurring as subscripts
are linearly independent.  Therefore any two of them are classically
prenilpotent, so their Chevalley relations are present in $\PSt$.
In particular,
$\U_{\mu+\alpha+\sigma}$ centralizes all the other terms;  we gather the $X_{\mu+\alpha+\sigma}(4t u)$
terms at the beginning.
Next, $[\U_\sigma,\U_{\alpha+\sigma}]=1$, so we may move $X_\sigma(1)$ to the right across
$X_{\alpha+\sigma}(2t)$.
Then we can use \eqref{eq-Chevalley-relation-for-m-and-a+s-BCn-case}
again to move $X_\mu(u)$  rightward across $X_{\alpha+\sigma}(2t)$.  The result is
\begin{align*}
\bigl[
X_\mu(u)
,
X_{\alpha+\sigma}(-2t)
X_\sigma(1)
\bigr]
{}={}&
X_{\mu+\alpha+\sigma}(4t u)
[X_\mu(u),X_\sigma(1)]
.
\end{align*}

Plugging this into \eqref{eq-intermediate-form-of-X-a-bracket-X-b-BCn-case} and canceling the commutators gives
\begin{align*}
[
X_\alpha(t)
,
X_\beta(u)
]
={}&
X_{2\sigma+\mu}(u)
X_{\mu+\alpha+\sigma}(4t u)
X_{2\sigma+\mu}(-u)
\notag
\\
={}&
X_{\alpha+\beta}(4t u).
\end{align*}
Tits' Chevalley relation in his definition of $\St$ has the same form,
with the factor $4$ replaced by some integer~$C$.  (Although we don't
need it, we remark that $C=\pm4$ by the second displayed equation in
\cite[\S3.5]{Tits}, or from \cite[Thm.~2(2)]{Morita-root-strings}.
This is related to the fact that $(\Q\alpha\oplus\Q\beta)\cap\Phi$ is a
rank~$1$ affine root system, of type $\BCtilde{}_1^\odd$.)
If $C\neq4$ then in $\St$ we deduce $X_{\alpha+\beta}\bigl((C-4)t
u\bigr)=1$ for all $t,u\in R$ and all rings $R$, leading to the same
contradiction we found in case~\XX{4}.  Therefore $C=4$ and we have
established that Tits' relation already holds in $\PSt$.
\end{proof}

\begin{proof}[Case~\XX{7} of theorem~\ref{thm-presentations}]
{\it Assume $\alphabar=\betabar$ is a short root of $\Phibar=BC_{n\geq2}$ and
  $\alpha+\beta$ is not a root.}
This is similar to the previous case but much easier.  We choose $\mu$,
$\sigma$ and the $e_\gamma$ in the same way, except that $\mu+\sigma+\alpha$ is no longer a
root, so the Chevalley relation
\eqref{eq-Chevalley-relation-for-m-and-a+s-BCn-case} is replaced by
$[\U_\mu,\U_{\alpha+\sigma}]=1$.  We expand $X_\beta(u)$ as in \eqref{eq-expression-for-X-b-of-u-BCn-case} and obtain
\eqref{eq-intermediate-form-of-X-a-bracket-X-b-BCn-case} as before.  But this time the $X_{\alpha+\sigma}(-2t)$ term centralizes
both $\U_\mu$ and $\U_\sigma$, so it vanishes from the commutator.  The right
side of \eqref{eq-intermediate-form-of-X-a-bracket-X-b-BCn-case} then collapses to $1$ and we have proven
$[\U_\alpha,\U_\beta]=1$ in $\PSt$.
\end{proof}

\section{Finite presentations}
\label{sec-finite-presentations}

\noindent
In this section we prove theorem
\ref{thm-finite-presentation-of-Steinberg}, that various
Steinberg and Kac-Moody groups are finitely presented.  At the end we
make several remarks about possible variations on the definition of
Kac-Moody groups.

\begin{proof}[Proof of theorem~\ref{thm-finite-presentation-of-Steinberg}]
We must show that $\St_A(R)$ is finitely presented under either of the
two stated hypotheses.  By theorem~\ref{thm-presentations} it suffices
to prove this with $\PSt$ in place of $\St$.
 
\ref{item-rk-3-and-module-finite-over-suitable-subring} We are
assuming $\rk A=3$ and that $R$ is finitely generated as a module
over a subring generated by finitely many units.
Theorem~\TheoremOnFinitePresentationGeneralTools\PartOfThatTheoremDealingWithTwoSpherical\ of
\cite{Allcock} shows that if $R$ satisfies this hypothesis and $A$ is
$2$-spherical, then $\PSt_A(R)$ is finitely presented.  This proves
\ref{item-rk-3-and-module-finite-over-suitable-subring}.

\ref{item-rk-4-and-finitely-generated-ring} Now we are assuming $\rk A>3$ and that $R$ is finitely generated
as a ring.
Theorem~\TheoremOnFinitePresentationGeneralTools\PartOfThatTheoremDealingWithFGRings\  of \cite{Allcock} gives the finite
presentability of $\PSt_A(R)$ if every pair of nodes of the Dynkin
diagram lies in some irreducible spherical diagram of rank${}\geq3$.
(This use of a covering of $A$ by spherical diagrams was also used by
Capdeboscq \cite{Capdeboscq}.)
By inspecting the list of affine Dynkin diagrams of rank${}>3$, one
checks that this treats all cases of \ref{item-rk-4-and-finitely-generated-ring} except
$$
A=
\begin{tikzpicture}[baseline=-2.5pt, line width=1pt, scale=1]
\draw (0,-.04) -- (1,-.04);
\draw (0,.04) -- (1,.04);
\draw (1,0) -- (1.53,0);
\draw[dashed] (1.53,0) -- (2.47,0);
\draw (2.47,0) -- (3,0);
\draw (3,-.04) -- (4,-.04);
\draw (3,.04) -- (4,.04);
\fill (0,0) circle (.1) node {\raise15pt\hbox{\smash{$\alpha$}}};
\fill (1,0) circle (.1) node {\raise15pt\hbox{\smash{$\beta$}}};
\fill (3,0) circle (.1) node {\raise15pt\hbox{\smash{$\gamma$}}};
\fill (4,0) circle (.1) node {\raise15pt\hbox{\smash{$\delta$}}};
\end{tikzpicture}
$$
(with some orientations of the double edges).  In this case, no
irreducible spherical diagram contains $\alpha$ and $\delta$. 

For this case we use a variation on the proof of 
theorem~\TheoremOnFinitePresentationGeneralTools\PartOfThatTheoremDealingWithFGRings\  of
\cite{Allcock}.
Consider the direct limit $G$ of the groups $\St_B(R)$ as $B$ varies over
all irreducible spherical diagrams of rank${}\geq2$.  If $\rk B\geq3$
then $\St_B(R)$ is finitely presented by theorem~I of Splitthoff
\cite{Splitthoff}.  If $\rk B=2$ then $\St_B(R)$ is finitely
generated by \cite[Lemma\ \LemmaOnFiniteGenerationOfRankTwoSteinbergGroups]{Allcock}.  Since every irreducible rank~$2$
diagram lies in one of rank${}>2$, it follows that $G$ is finitely
presented.  Now, $G$ satisfies all the relations of $\St_A(R)$ except
for the commutativity of $\St_{\singletonalpha}$ with $\St_{\{\delta\}}$.  Because these
groups may not be finitely generated, we might need infinitely many
additional relations to impose commutativity
in the obvious way.

So we proceed indirectly.  Let $Y_\alpha$ be a finite subset of
$\St_{\singletonalpha}$ which together with $\St_{\{\beta\}}$ generates
$\St_{\{\alpha,\beta\}}$.  This is possible since
$\St_{\{\alpha,\beta\}}$ is finitely generated.  We define $Y_\delta$
similarly, with $\gamma$ in place of $\beta$.  We define $H$ as the
quotient of $G$ by the finitely many relations
$[Y_\alpha,Y_\delta]=1$, and claim that the images in $H$ of
$\St_{\singletonalpha}$ and $\St_{\{\delta\}}$ commute.

The following computation in $H$ establishes this.
First, every element of $Y_\delta$ centralizes
$\St_{\{\beta\}}$ by the definition of $G$, and every element of $Y_\alpha$
by definition of $H$.  Therefore it centralizes $\St_{\{\alpha,\beta\}}$,
hence $\St_{\singletonalpha}$.  We've shown that $\St_{\singletonalpha}$ centralizes 
 $Y_\delta$, and it centralizes $\St_{\{\gamma\}}$ by the definition of $G$.
Therefore it centralizes $\St_{\{\gamma,\delta\}}$, hence $\St_{\{\delta\}}$.

$H$ has the same generators as $\PSt_A(R)$, and its defining relations
are among those defining $\PSt_A(R)$.  On the other hand, we have
shown that the generators of $H$ satisfy all the relations in
$\PSt_A(R)$.  So $H\iso\PSt_A(R)$.  In particular, $\PSt_A(R)$ is
finitely presented.

It remains to prove the finite presentability of $\G_{\!A}(R)$ under
the extra hypothesis that the unit group of $R$ is finitely generated
as an abelian group.  This follows  from \cite[Lemma
  \CorollaryOnFiniteGenerationOfKernel]{Allcock}, which says that the quotient of $\PSt_A(R)$ by all
the relations \eqref{eq-torus-relations} is equally well defined by finitely many of
them.  Choosing finitely many such relations, and imposing them on
the quotient $\St_A(R)$ of $\PSt_A(R)$, gives all the relations
\eqref{eq-torus-relations}.  The quotient of $\St_A(R)$ by these is
the definition of
$\G_{\!A}(R)$, proving its finite presentation.
\end{proof}



\begin{remark}[Completions]
We have worked with the ``minimal'' or ``algebraic'' forms of
Kac-Moody groups.  One can consider various completions of it, such as
those surveyed in \cite{Tits-survey}.  None of these completions can
possibly be finitely presented, so no analogue of theorem
\ref{thm-finite-presentation-of-Steinberg} exists.  But it is
reasonable to hope for an analogue
of corollary~\ref{cor-Steinberg-as-direct-limit}.
\end{remark}

\begin{remark}[Chevalley-Demazure group schemes]
If $A$ is spherical then we write $\CD_{\!A}$ for the associated
Chevalley-Demazure group scheme, say the simply-connected version.
This is the unique most natural (in a certain technical sense)
algebraic group over $\Z$ of type~$A$.  If $R$ is a Dedekind domain of
arithmetic type, then the question of whether $\CD_{\!A}(R)$ is finitely
presented was settled by Behr
\cite{Behr-number-field-case}\cite{Behr-function-field-case}.  We
emphasize that our theorem~\ref{thm-finite-presentation-of-Steinberg}
does  not give a new proof of his results, because $\CD_{\!A}(R)$ may be
a proper quotient of $\G_{\!A}(R)$.  The kernel of
$\St_{A}(R)\to\CD_{\!A}(R)$ is called $K_2(A;R)$ and contains the
relators \eqref{eq-torus-relations}.  It can be extremely complicated.

For a non-spherical Dynkin diagram~$A$, the functor $\CD_{\!A}$ is not
defined.  The question of whether there is a good definition, and what
it would be, seems to be completely open.  
Only when $R$ is a field is there known to be a unique ``best''
definition of a Kac-Moody group \cite[theorem~1$'$, p.~553]{Tits}.
The main problem would be
to specify what extra relations to impose on $\G_{\!A}(R)$.  The
remarks below discuss the possible forms of some additional relations.
\end{remark}

\begin{remark}[Kac-Moody groups over integral domains]
If $R$ is an integral domain with fraction field $k$, then it is open
whether $\G_{\!A}(R)\to\G_{\!A}(k)$ is injective.  If $\G_{\!A}$
satisfies Tits' axioms then this would follow from (KMG4), but Tits
does not assert that $\G_{\!A}$ satisfies his axioms.    If
$\G_{\!A}(R)\to\G_{\!A}(k)$ is not injective, then the image seems
better candidate than $\G_{\!A}(R)$ itself,  for the role of ``the''
Kac-Moody group.
\end{remark}

\begin{remark}[Kac-Moody groups via representations]
Fix a root datum~$D$ and a commutative ring~$R$.  By using Kostant's $\Z$-form
of the universal enveloping algebra of $\g$, one can construct a
$\Z$-form $V^\lambda_\Z$ of any integrable highest-weight module
$V^\lambda$ of~$\g$.  Then one defines $V^\lambda_R$ as
$V^\lambda_\Z\tensor R$.  For each real root $\alpha$, one can
exponentiate $\g_{\alpha,\Z}\tensor R\iso R$ to get an action of
$\U_\alpha\iso R$ on $V^\lambda_R$.  One can define the action of the
torus $(\Runits)^n$ directly.  Then one can take the group $\G^\lambda_{\!D}(R)$
generated by these transformations and call it a Kac-Moody group.
This approach is extremely natural and not yet fully worked out.  The
first such work for Kac-Moody groups over rings is Garland's landmark
paper \cite{Garland} treating affine groups; see also Tits' survey \cite[\S5]{Tits-survey},
its references, and the recent articles \cite{Bao-Carbone}\cite{Carbone-Garland}.

Tits \cite[p.~554]{Tits} asserts that this construction allows one to
build a Kac-Moody functor satisfying all his axioms (KMG1)--(KMG9).
We imagine that he reasoned as follows.  First, show that each
$\G^\lambda_{\!D}$ is a Kac-Moody functor and therefore by Tits'
theorem admits a canonical functorial homomorphism from $\G_{\!A}$,
where $A$ is the generalized Cartan matrix of $D$.  (One cannot
directly apply Tits' theorem, because $\G^\lambda_{\!D}(R)$ only comes equipped
with the homomorphisms $\SL_2(R)\to\G^\lambda_{\!D}(R)$ required by
Tits when $\SL_2(R)$ is generated by its subgroups
$\bigl(\begin{smallmatrix}1&*\\0&1\end{smallmatrix}\bigr)$ and
  $\bigl(\begin{smallmatrix}1&0\\ *&1\end{smallmatrix}\bigr)$.
    Presumably this difficulty can be overcome.)  Second, define $I$ as the intersection of the
    kernels of all the homomorphisms $\G_{\!A}\to\G^\lambda_{\!D}$,
    and then define the desired Kac-Moody functor as $\G_{\!A}/I$.
    (This also does not quite make sense, since $\G_{\!A}$ may also
    lack the required homomorphisms from $\SL_2$.  As before,
    presumably this difficulty can be overcome.)
\end{remark}

\begin{remark}[Loop groups]
Suppose $X$ is one of the $ABCDEFG$ diagrams, $\Xtilde$ is its affine
extension as in section~\ref{sec-new-nomenclature}, and $R$ is a commutative ring.  The
well-known description of affine Kac-Moody algebras and loop groups makes it natural to expect that
$\G_{\!\Xtilde}(R)$ is a central extension of $\G_{\!X}(R[t^{\pm1}])$ by~$\Runits$.
The most general results along these lines that I know of are
Garland's theorems 10.1 and B.1 in \cite{Garland}, although they
concern slightly different groups.  Instead, one might simply {\it define}
the loop group $G_{\Xtilde}(R)$ as a central extension of
$\CD_{\!X}(R[t^{\pm1}])$ by $\Runits$, where the $2$-cocycle defining the
extension would have to be made explicit.  Then  one could try to show that $G_{\Xtilde}$
satisfies Tits' axioms.

It is natural to ask whether
such a group $G_{\Xtilde}(R)$ would be finitely presented if $R$ is
finitely generated.  If $\Runits$ is
finitely generated then this is equivalent to the finite presentation
of the quotient $\CD_{\!X}(R[t^{\pm1}])$.  If $\rk X\geq3$ then
$\St_X(R[t^{\pm1}])$ is finitely presented by
Splitthoff's theorem~I of \cite{Splitthoff}.  Then, as Splitthoff
explains in \cite[\S7]{Splitthoff}, the finite presentability of
$\CD_{\!X}(R[t^{\pm1}])$ boils down to properties of $K_1(X,R[t^{\pm1}])$
and $K_2(X,R[t^{\pm1}])$.
\end{remark}

\end{document}